\documentclass[preprint,12pt]{elsarticle}
\usepackage[cp1251]{inputenc}

\usepackage{a4wide,amsfonts, amsmath, amscd}
\usepackage[psamsfonts]{amssymb}

\usepackage{amssymb}
 \usepackage{graphicx}
\usepackage{epsfig}


\usepackage{amsthm}





\newcommand{\sysn}{\left\{\begin{array}{rcl}}
\newcommand{\sysk}{\end{array}\right.}

\newtheorem{theorem}{Theorem}[section]
\newtheorem{lemma}[theorem]{Lemma}

\newtheorem{proposition}[theorem]{Proposition}
\theoremstyle{definition}
\newtheorem{definition}[theorem]{Definition}

\newtheorem{corollary}[theorem]{Corollary}

\journal{...}

\begin{document}

\title{Fr\'{e}chet-Urysohn property of quasicontinuous functions}

\author{Alexander V. Osipov}

\address{Krasovskii Institute of Mathematics and Mechanics, \\ Ural Federal
 University, Ural State University of Economics, Yekaterinburg, Russia}

\ead{OAB@list.ru}

\begin{abstract} The aim of this paper is to study  the Fr\'{e}chet-Urysohn property of the space $Q_p(X,\mathbb{R})$ of
real-valued quasicontinuous functions, defined on a Hausdorff
space $X$, endowed with the pointwise convergence topology.

It is proved that under {\it Suslin's Hypothesis}, for an open
Whyburn space $X$, the space $Q_p(X,\mathbb{R})$ is
Fr\'{e}chet-Urysohn if and only if $X$ is countable. In
particular, it is true in the class of  first-countable regular
spaces $X$.

In $ZFC$, it is proved that for a metrizable space $X$, the space
$Q_p(X,\mathbb{R})$ is Fr\'{e}chet-Urysohn if and only if $X$ is
countable.
\end{abstract}


\begin{keyword} quasicontinuous function \sep Fr\'{e}chet-Urysohn
\sep Lusin space \sep open Whyburn space \sep
$k$-Fr\'{e}chet-Urysohn \sep $\gamma$-space \sep Suslin's
Hypothesis \sep selection principle

\MSC[2010] 54C35 \sep 54C40

\end{keyword}

\maketitle 


\section{Introduction}

\medskip

A function $f$ from a topological space $X$ into $\mathbb{R}$ is
{\it quasicontinuous}, $f\in Q(X,\mathbb{R})$, if for every $x\in
X$ and open sets $U\ni x$ and $V\ni f(x)$ there exists a nonempty
open $W\subseteq U$ with $f(W)\subseteq V$.

The condition of quasicontinuity can be found in the paper of R.
Baire \cite{Bai} in study of continuity point of separately
continuous functions from $\mathbb{R}^2$ into $\mathbb{R}$. The
formal definition of quasicontinuity  were introduced by Kempisty
in 1932 in \cite{Kem}. Quasicontinuous functions were studied in
many papers, see for examples \cite{Bor,HM,Hol1,Hol2,Hol3,Hol5},
\cite{Hol7,Neo,Mar} and other. They found applications in the
study of topological groups \cite{Bou,Moo1,Moo3}, in the study of
dynamical systems \cite{Cr}, in the the study of CHART groups
\cite{Moo} and also used in the study of extensions of densely
defined continuous functions \cite{Hol6} and of extensions to
separately continuous functions on the product of pseudocompact
spaces \cite{Rez}, etc.

\medskip

Levine \cite{Lev} studied quasicontinuous maps under the name of
semi-continuity using the terminology of semi-open sets. A subset
$A$ of $X$ is {\it semi-open} if $A\subset \overline{Int(A)}$. A
function $f:X\rightarrow Y$ is called {\it semi-continuous} if
$f^{-1}(V)$ is semi-open in $X$ for every open
 set $V$ of $Y$. A map $f:X\rightarrow \mathbb{R}$ is quasicontinuous if and only
if $f$ is semi-continuous \cite{Lev}.

\medskip

Let $X$ be a Hausdorff  topological space, $Q(X,\mathbb{R})$ be
the space of all  quasicontinuous functions on $X$ with values in
$\mathbb{R}$ and $\tau_p$ be the pointwise convergence topology.
Denote by $Q_p(X,\mathbb{R})$ the topological space
$(Q(X,\mathbb{R}), \tau_p)$.

\medskip

A subset $U$ of a topological space $X$ is called a {\it regular
open set} or an {\it open domain} if $U=Int\overline{U}$ holds. A
subset $F$ of a topological space $X$ is called a {\it regular
closed } set or a {\it closed domain } if $F =\overline{IntF}$
holds. The family of regular open sets of $(X, \tau )$ is not a
topology. But it is a base for a topology $\tau_s$ called the {\it
semi-regularization} of $\tau$. If $\tau_s=\tau$, then $(X, \tau)$
is called {\it semi-regular} (or {\it quasi-regular}).

In (\cite{Nj}, Corollary 1), it is proved that a semi-regular
topology is the coarsest topology of its $\alpha$-class. Note that
all topologies of a given $\alpha$-class on $X$ determine the same
class of qusicontinuous mappings into an arbitrary topological
space (Proposition 9, \cite{Nj}). Since a Hausdorff topology
$\tau$ has a Hausdorff semi-regularization $\tau_s$ and
$Q_p((X,\tau),\mathbb{R})=Q_p((X,\tau_s),\mathbb{R})$, we can
further assume that  $X$ is a {\it Hausdorff semi-regular space}.

\medskip

In this paper we study the Fr\'{e}chet-Urysohn property of the
space $Q_p(X,\mathbb{R})$.

\section{Preliminaries}

Let us recall some properties and introduce new property of a
topological space $X$.

(1) A space $X$ is {\it Fr\'{e}chet-Urysohn} provided that for
every $A\subset X$ and $x\in \overline{A}$ there exists a sequence
in $A$ converging to $x$.


(2) A space $X$ is said to be {\it Whyburn} if $A\subset X$ and
$p\in \overline{A}\setminus A$ imply that there is a subset
$B\subseteq A$ such that $\overline{B}=B\cup\{x\}$.

(3) A space $X$ is said to be {\it $k$-Fr\'{e}chet-Urysohn} if for
every open subset $U$ of $X$ and every $x\in \overline{U}$, there
exists a sequence $(x_n)_{n\in \mathbb{N}}\subset U$ converging to
$x$.

\begin{definition} A topological space $X$ is called {\it open
Whyburn} if for every open set $A\subset X$ and every $x\in
\overline{A}\setminus A$ there is an open set $B\subseteq A$ such
that $\overline{B}\setminus A=\{x\}$.
\end{definition}

Let $X$ be a Tychonoff topological space, $C(X,\mathbb{R})$ be the
space of all  continuous functions on $X$ with values in
$\mathbb{R}$ and $\tau_p$ be the pointwise convergence topology.
Denote by $C_p(X,\mathbb{R})$ the topological space
$(C(X,\mathbb{R}), \tau_p)$.

\medskip
Let us recall that a cover $\mathcal{U}$ of a set $X$ is called

$\bullet$ an {\it $\omega$-cover} if each finite set $F\subseteq
X$ is contained in some $U\in \mathcal{U}$;

$\bullet$ a {\it $\gamma$-cover} if for any $x\in X$ the set
$\{U\in \mathcal{U}: x\not\in U\}$ is finite.

\medskip

A topological space $X$ is called a {\it $\gamma$-space} if each
$\omega$-cover $\mathcal{U}$ of $X$ contains a $\gamma$-subcover
of $X$. $\gamma$-Spaces were introduced by Gerlits and Nagy in
\cite{GN} and are important in the theory of function spaces as
they are exactly those $X$ for which the space $C_p(X,\mathbb{R})$
has the Fr\'{e}chet-Urysohn property \cite{GN2}.

\medskip

Clear that $C_p(X,\mathbb{R})$ is a subspace of
$Q_p(X,\mathbb{R})$. Thus, if $Q_p(X,\mathbb{R})$ is
Fr\'{e}chet-Urysohn then $C_p(X,\mathbb{R})$ is
Fr\'{e}chet-Urysohn, too. Hence, the property Fr\'{e}chet-Urysohn
of $Q_p(X,\mathbb{R})$ for a Tychonoff space $X$ implies that $X$
is a $\gamma$-space.



A set $A$ is called {\it minimally bounded} with respect to the
topology $\tau$ in a topological space $(X,\tau)$ if
$\overline{Int A}\supseteq A$ and $Int\overline{A}\subseteq A$
(\cite{1}, p.101). Clearly this means $A$ is semi-open and
$X\setminus A$ is semi-open. In the case of {\it open} sets,
minimal boundedness coincides with regular openness.

Note that if $U$ is a  minimally bounded (e.g. regular open) set
of $X$ such that $U$ is not dense subset in $X$ and $B\subset
\overline{U}\setminus U$ then there is a quasicontinuous function
$f:X\rightarrow \mathbb{R}$ such that $f(U\cup B)=0$ and
$f(X\setminus (U\cup B))=1$ (see Lemma 4.2 in \cite{Hol5}).

\begin{proposition}\label{pr11} Let $Q_p(X,\mathbb{R})$ be a
Fr\'{e}chet-Urysohn space. Then $\overline{W}\setminus W$ is
countable for every minimally bounded set $W$ of $X$.
\end{proposition}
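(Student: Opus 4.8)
The plan is to argue by contradiction. Suppose $W$ is a minimally bounded set of $X$ with $\overline{W}\setminus W$ uncountable. I want to build a point $f$ in the closure of some set $A\subseteq Q(X,\mathbb{R})$ to which no sequence from $A$ can converge, thereby violating the Fréchet–Urysohn property. The key tool is the fact recalled just before the proposition (Lemma 4.2 in \cite{Hol5}): for a minimally bounded non-dense $U$ and any $B\subseteq\overline{U}\setminus U$ there is a quasicontinuous $f$ with $f(U\cup B)=0$ and $f(X\setminus(U\cup B))=1$.

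First I would set up the relevant set of functions. For each finite subset $F\subseteq\overline{W}\setminus W$, apply the cited lemma with $U=W$ and $B=F$ to obtain a quasicontinuous function $f_F$ that is $0$ on $W\cup F$ and $1$ off $W\cup F$; note that $f_F$ equals $0$ exactly on $W\cup F$ and $1$ on $(\overline{W}\setminus W)\setminus F$. Let $A=\{f_F : F\subseteq \overline{W}\setminus W \text{ finite}\}$. The natural candidate limit is the constant-type function $g$ that is $0$ on all of $\overline{W}$ and $1$ elsewhere (one should check $g$ is itself quasicontinuous, which follows by taking $B=\overline{W}\setminus W$ in the lemma). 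The second step is to verify $g\in\overline{A}$ in $\tau_p$: a basic neighborhood of $g$ constrains finitely many points, and by choosing $F$ to contain those among them that lie in $\overline{W}\setminus W$, the function $f_F$ agrees with $g$ on all the constrained points, so it lands in the neighborhood.

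The third and decisive step is to show no sequence from $A$ converges to $g$. A convergent sequence $(f_{F_n})$ would have to converge pointwise to $g$; in particular, for every $x\in\overline{W}\setminus W$ we would need $f_{F_n}(x)\to g(x)=0$, which forces $x\in F_n$ for all large $n$. Hence $\overline{W}\setminus W\subseteq\bigcup_n F_n$ up to finitely many indices per point, so $\overline{W}\setminus W$ would be contained in a countable union of finite sets, hence countable — contradicting our assumption. This contradiction shows that if $Q_p(X,\mathbb{R})$ is Fréchet–Urysohn, then $\overline{W}\setminus W$ must be countable.

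The main obstacle I anticipate is the degenerate case where $W$ is dense (so $\overline{W}=X$ and the cited lemma does not directly apply, since it requires $U$ non-dense). If $W$ is dense, then $\overline{W}\setminus W = X\setminus W$, and I would need a separate argument or a modification — perhaps observing that a dense minimally bounded $W$ with $X\setminus W$ uncountable can be handled by working instead with the semi-open complement or by a direct construction of quasicontinuous functions separating finite subsets of $X\setminus W$. A secondary technical point is confirming that the limit function $g$ is genuinely quasicontinuous and that each $f_F$ is well defined as stated; both reduce to careful application of the separation lemma, so I expect them to be routine rather than deep.
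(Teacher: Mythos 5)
Your proposal is correct and is essentially the paper's own proof: the same family of functions (value $0$ on $W$ together with a finite subset of $\overline{W}\setminus W$, value $1$ elsewhere, via Lemma 4.2 of \cite{Hol5}), the same limit function $g$, and the same contradiction that a convergent sequence $f_{F_n}\to g$ would force the uncountable set $\overline{W}\setminus W$ into the countable union $\bigcup_n F_n$. The one obstacle you flag, the case of dense $W$, is vacuous: if $W$ is minimally bounded and dense, then $Int\overline{W}=Int\, X=X\subseteq W$, so $W=X$ and $\overline{W}\setminus W=\emptyset$, leaving nothing to prove.
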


\begin{proof} Let $W$ be a minimally bounded set $W$ of $X$.
Note that $W\cup B$ is a minimally bounded set in $X$ for any
$B\subseteq \overline{W}\setminus W$.

Let $M_K=W\cup (\overline{W}\cap K)$ for each $K\in
[X]^{<\omega}$.

Suppose that $D=\overline{W}\setminus W$ is uncountable.

Consider the set $C=\{f_{K}: K\in [X]^{<\omega}\}$ of
quasicontinuous functions $f_{K}$ where

 $$ f_{K}:=\left\{
\begin{array}{lcl}
0 \, \, \, \, \, \, \, \, \, \, on \, \, \, \,  M_K\\
1 \, \, \, on \, \, \, X\setminus M_K.\\
\end{array}
\right.
$$

Let $$ g:=\left\{
\begin{array}{lcl}
0 \, \, \, \, \, \, \, \, \, \, on \, \, \, \,  \overline{W}\\
1 \, \, \, on \, \, \, X\setminus \overline{W}.\\
\end{array}
\right.
$$

Note that $g\in Q_p(X,\mathbb{R})$ and $g\in \overline{C}$. Since
$Q_p(X,\mathbb{R})$ is Fr\'{e}chet-Urysohn there is a sequence
$\{f_{K_i}: i\in \mathbb{N}\}\subset C$ such that
$f_{K_i}\rightarrow g$ ($i\rightarrow \infty$). Since $D$ is
uncountable, there is $z\in D\setminus \bigcup\limits_i K_i$.
Consider $[z,(-\frac{1}{2},\frac{1}{2})]=\{f\in Q_p(X,\mathbb{R}):
f(z)\in (-\frac{1}{2},\frac{1}{2})\}$. Note that $g\in
[z,(-\frac{1}{2},\frac{1}{2})]$ and $f_{K_i}\notin
[z,(-\frac{1}{2},\frac{1}{2})]$ for any $i\in \mathbb{N}$
($f_{K_i}(z)=1$ for every $i\in \mathbb{N}$), it is a
contradiction.

\end{proof}




\section{Main results}

\begin{lemma}\label{lem11} Let $X$ be an open Whyburn space such that $Q_p(X,\mathbb{R})$ is Fr\'{e}chet-Urysohn. Then every nowhere subset in $X$ is
countable.
\end{lemma}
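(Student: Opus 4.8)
The plan is to argue by contradiction through Proposition \ref{pr11}, so the whole task reduces to a single construction. First I would replace $N$ by its closure: if $N$ is nowhere dense then $\overline{N}$ is closed and nowhere dense, and if $\overline{N}$ is countable so is $N$; thus I may assume $N$ is closed nowhere dense. Then $N$ has empty interior, so $U:=X\setminus N$ is a dense open set and no point of $N$ is isolated in $X$ (an isolated point lying in $N$ would be a nonempty open subset of $N$). Since every regular open set is minimally bounded, Proposition \ref{pr11} tells me that it suffices to produce \emph{one} regular open set $V$ whose boundary $\overline{V}\setminus V$ is uncountable; equivalently, I must place uncountably many points of $N$ on the boundary of a single regular open set.

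Next I would exploit the open Whyburn hypothesis. For each $x\in N$ we have $x\in\overline{U}\setminus U$ with $U$ open, so there is an open set $B_x\subseteq U$ with $\overline{B_x}\setminus U=\{x\}$; equivalently $x\in\overline{B_x}$ and $\overline{B_x}\cap N=\{x\}$. For a subset $S\subseteq N$ put $W_S:=\bigcup_{x\in S}B_x$. This is open, $W_S\subseteq U$ (so $W_S\cap N=\emptyset$), and $S\subseteq\overline{W_S}\setminus W_S$ since each $x\in\overline{B_x}\subseteq\overline{W_S}$. Regularizing gives the candidate regular open set $V_S:=Int\,\overline{W_S}$, with $\overline{V_S}=\overline{W_S}\supseteq S$.

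The main obstacle is that regularization can swallow points: a point $x\in S$ fails to lie in $\overline{V_S}\setminus V_S$ exactly when some neighborhood of $x$ is contained in $\overline{W_S}$, i.e. when $x\in Int\,\overline{W_S}$. If $S=N$ the set $N\cap Int\,\overline{W_S}$ may well be uncountable, so the naive union does not work, and the content of the lemma is to choose an uncountable $S\subseteq N$ with $S\cap Int\,\overline{W_S}=\emptyset$. I would attack this by a transfinite recursion building $S=\{x_\alpha:\alpha<\omega_1\}$: having chosen the earlier points and formed the current regular open set, I pick a fresh $x_\alpha\in N$ in the \emph{exterior} of that set — such a point exists because, under the Fr\'echet--Urysohn assumption, Proposition \ref{pr11} forces the accumulated boundary to be countable while $N$ is uncountable — and I route $B_{x_\alpha}$ (shrunk to a regular open subset using semi-regularity, still accumulating at $x_\alpha$) inside the current exterior. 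The density of $U$ together with the non-isolation of each point of $N$ in $X$ is what keeps the exterior dense near each selected point and so prevents swallowing at successor stages. I expect the genuinely delicate step to be the limit stages, where the accumulated union can surround previously chosen points; controlling $Int\,\overline{W_S}$ there is where the open Whyburn property and the countability of all boundaries must be used together.

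Finally, once such an uncountable $S$ is secured, $V_S$ is a regular open (hence minimally bounded) set with $\overline{V_S}\setminus V_S\supseteq S$ uncountable, contradicting Proposition \ref{pr11}. Therefore $N$ is countable, and since every nowhere dense set has nowhere dense closure, every nowhere dense subset of $X$ is countable.
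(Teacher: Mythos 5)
Your reduction to Proposition \ref{pr11} is a reasonable idea, but the proof has a genuine gap at exactly the point you flag yourself: the transfinite recursion is never actually carried out, and the part you defer (the limit stages) is not a technicality but the crux. Worse, the one existence claim you do assert is circular. You say a fresh point $x_\alpha\in N$ outside $\overline{W_{<\alpha}}$ exists ``because Proposition \ref{pr11} forces the accumulated boundary to be countable while $N$ is uncountable.'' But $N\cap\overline{W_{<\alpha}}$ splits into $N\cap Int\,\overline{W_{<\alpha}}$ (the swallowed points) and $N\cap(\overline{W_{<\alpha}}\setminus Int\,\overline{W_{<\alpha}})$ (the boundary points); Proposition \ref{pr11} controls only the second piece. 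So fresh points are guaranteed only under the inductive invariant that no point of $N$ has been swallowed so far --- and preserving that invariant through limit stages is precisely what you leave unproved. Note also that disjointness of the individual closures does not help: even if $\overline{B_\beta}\cap\overline{B_\alpha}=\emptyset$ for all $\beta>\alpha$ (which your successor-stage construction does give), the union $\bigcup_{\beta>\alpha}B_\beta$ can still accumulate at $x_\alpha$ from all sides and put $x_\alpha$ into the interior of its closure, as already happens for intervals in $\mathbb{R}$ approaching a point. As written, the proposal is an unfinished program, not a proof.

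The paper's proof sidesteps the swallowing problem entirely by never forming an uncountable union. For each \emph{finite} set $K\subseteq X$ it builds a minimally bounded set $M_K=S_K\cup\bigcup\{O_a\cup\{a\}:a\in K\cap A\}$ using only finitely many Whyburn sets $O_a\subseteq X\setminus A$ with $\overline{O_a}\cap A=\{a\}$, takes the quasicontinuous functions $f_K$ equal to $0$ on $M_K$ and $1$ off $M_K$, and observes that ${\bf 0}$ lies in the closure of $\{f_K:K\in[X]^{<\omega}\}$. Fr\'{e}chet--Urysohnness of $Q_p(X,\mathbb{R})$ yields a convergent sequence $f_{K_i}\rightarrow{\bf 0}$, and since $\bigcup_i K_i$ is countable while $A$ is uncountable, any $z\in A\setminus\bigcup_i K_i$ satisfies $f_{K_i}(z)=1$ for all $i$, a contradiction. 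In other words, the paper applies the Fr\'{e}chet--Urysohn hypothesis directly to a carefully chosen countable-to-one-indexed family of functions rather than through Proposition \ref{pr11}, and the uncountability of $A$ is used only at the very end to escape a countable union of finite sets --- no transfinite construction, and no need to control $Int\,\overline{W}$ of an uncountable union. If you want to salvage your route, you would have to prove the missing limit-stage lemma, and it is not clear that this can be done in the stated generality.
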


\begin{proof} Since the closure of a nowhere dense subset in $X$ is a nowhere dense set, we can
consider only closed nowhere dense sets in $X$.

   Assume that $A$ is an uncountable closed nowhere dense set in
$X$. Since $X$ is open Whyburn, for every point $a\in A$ there is
a regular open set $O_a\subseteq X\setminus A$ such that
$\overline{O_a}\setminus (X\setminus A)=\{a\}$.

 For every a finite subset $K$ of $X$ we consider the
set

$M_K=S_K\cup \bigcup\{O_a\cup\{a\}: a\in K\cap A\}$ where $S_K$ is
a regular open set such that $K\cap (X\setminus A)\subseteq
S_{K}\subseteq X\setminus A$. Note that $M_K$ is minimally bounded
set in $X$.

Consider the set $S=\{f_{K}: K\in [X]^{<\omega}\}$ of
quasicontinuous functions $f_{K}$ where

 $$ f_{K}:=\left\{
\begin{array}{lcl}
0 \, \, \, \, \, \, \, \, \, \, on \, \, \, \,  M_K\\
1 \, \, \, on \, \, \, X\setminus M_K.\\
\end{array}
\right.
$$

Note that ${\bf 0}\in \overline{S}$ where ${\bf 0}$ denote the
constant function on $X$ with value 0. Since $Q_p(X,\mathbb{R})$
is Fr\'{e}chet-Urysohn, there is a sequence $\{f_{K_i}: i\in
\mathbb{N}\}\subset S$ such that $f_{K_i}\rightarrow {\bf 0}$
($i\rightarrow \infty$). Since $A$ is uncountable, there is $z\in
A\setminus \bigcup\limits_i K_i$. Consider
$[z,(-\frac{1}{2},\frac{1}{2})]=\{f\in Q_p(X,\mathbb{R}): f(z)\in
(-\frac{1}{2},\frac{1}{2})\}$. Note that ${\bf 0}\in
[z,(-\frac{1}{2},\frac{1}{2})]$ and $f_i\notin
[z,(-\frac{1}{2},\frac{1}{2})]$ for any $i\in \mathbb{N}$
($f_i(z)=1$ for every $i\in \mathbb{N}$), it is a contradiction.
\end{proof}

\begin{definition}(\cite{Kun})
A Hausdorff space $X$ is called a {\it Lusin space} ({\it in the
sense of Kunen})  if

(a) Every nowhere dense set in $X$ is countable;

(b) $X$ has at most countably many isolated points;

(c) $X$ is uncountable.
\end{definition}

\begin{theorem}\label{th1} Let $X$ be an uncountable open Whyburn space such that $Q_p(X,\mathbb{R})$ is Fr\'{e}chet-Urysohn.
Then $X$ is a Lusin space.
\end{theorem}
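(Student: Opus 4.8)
The space $X$ is uncountable by hypothesis, so condition (c) of the definition holds, and Lemma~\ref{lem11} is verbatim condition (a). Hence the entire content of the theorem is condition (b): I must show that the set $I$ of isolated points of $X$ is countable. I would argue by contradiction, assuming $|I|>\omega$. Since $I$ is open, its boundary $\overline{I}\setminus I$ is closed with empty interior, hence nowhere dense, so by Lemma~\ref{lem11} it is countable. In particular $C:=Int\,\overline{I}\setminus I$ is countable, while $Int\,\overline{I}$ is uncountable and $I$ is dense in it.

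The engine is the same family of quasicontinuous functions used in Proposition~\ref{pr11} and Lemma~\ref{lem11}. For a finite $K\subseteq I$ the indicator $\mathbf{1}_K$ is continuous, hence quasicontinuous, because $K$ is a finite, and therefore clopen, set of isolated points. More importantly, for every \emph{regular open} $D$ the indicator $\mathbf{1}_D$ is quasicontinuous: applying the remark preceding Proposition~\ref{pr11} (Lemma~4.2 in \cite{Hol5}) with $B=\emptyset$ produces a quasicontinuous $f$ with $f(D)=0$ and $f(X\setminus D)=1$, i.e. $\mathbf{1}_D=1-f\in Q(X,\mathbb{R})$. Now fix any regular open $D\subseteq I$. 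For every finite $F\subseteq X$ the choice $K=D\cap F$ gives $\mathbf{1}_K=\mathbf{1}_D$ on $F$, so $\mathbf{1}_D$ lies in the $\tau_p$-closure of $\{\mathbf{1}_K:K\in[I]^{<\omega}\}$. If such a $D$ were uncountable this would contradict the Fr\'echet-Urysohn property exactly as in the earlier proofs: a sequence $\mathbf{1}_{K_i}\to\mathbf{1}_D$ would force $\mathbf{1}_{K_i}(x)\to 1$, hence $x\in K_i$ eventually, for every $x\in D$, so that $D\subseteq\bigcup_i K_i$ would be countable; picking a witness $z\in D\setminus\bigcup_i K_i$ gives the contradiction. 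Thus every regular open subset of $I$ is countable, and the theorem is reduced to producing a single uncountable regular open $D\subseteq I$.

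To build $D$ I would exploit the open Whyburn property for the open set $A=I$. For each of the countably many points $c\in C$ it yields an open $B_c\subseteq I$ with $\overline{B_c}=B_c\cup\{c\}$, a ``convergent'' set of isolated points accumulating only at $c$. The point of the construction is the following observation: if $D=I\setminus E$ is open and every $c\in C$ satisfies $c\in\overline{E}=\overline{I\setminus D}$, then $D$ is regular open. Indeed $Int\,\overline{D}\setminus D\subseteq\overline{I}\setminus I$ (an isolated point of $\overline{D}$ lies in $D$), so the only possible extra points of $Int\,\overline{D}$ are the $c\in C$; and since each $p\in I\setminus D$ satisfies $p\notin\overline{D}$, the condition $c\in\overline{I\setminus D}$ gives $c\in\overline{X\setminus\overline{D}}$, whence $c\notin Int\,\overline{D}$. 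So the plan is to remove from $I$ a set $E$ assembled from the gadgets $B_c$ so that $E$ still accumulates at every $c\in C$ while $D=I\setminus E$ stays uncountable.

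The hard part is precisely this last step: reconciling the two size requirements. If for some $c$ the gadget $B_c$ is already regular open (equivalently $c\notin Int\,\overline{B_c}$) and uncountable, one is done at once; and if every $B_c$ can be chosen countable, then $E=\bigcup_{c\in C}B_c$ is countable, $D=I\setminus E$ is uncountable, and the criterion above applies. The delicate case is when the only available gadgets are uncountable and ``thick'' (with $c\in Int\,\overline{B_c}$), for then exposing every $c$ to the complement of $D$ threatens to exhaust the uncountable set $I$. I expect to resolve this by passing to the regular open hull $Int\,\overline{B_c}$ and splitting each thick $B_c$ into an uncountable regular open piece that is kept in $D$ and an uncountable piece put into $E$ that still accumulates at $c$ (this is possible, for instance, in the model $\omega_1\cup\{c\}$ with co-countable neighbourhoods of $c$, where any uncountable, co-uncountable subset of $\omega_1$ is regular open). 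Verifying that such a simultaneous choice can be made for all $c\in C$ at once, using only the open Whyburn property together with the countability of $C$, is the crux of the whole argument.
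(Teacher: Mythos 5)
Your reduction is sound and runs parallel to the paper's own proof: conditions (a) and (c) are indeed immediate from Lemma~\ref{lem11} and the hypothesis, your ``engine'' (finite indicators of isolated points accumulating at the indicator of an uncountable regular open $D\subseteq I$, then violating Fr\'{e}chet-Urysohn) is exactly the kind of family the paper uses, and your criterion for $D=I\setminus E$ to be regular open is correct, as is the case where every Whyburn gadget $B_c$ can be shrunk to a countable one. But the proof is not complete: the ``thick'' case --- some $c$ with $V\cap B_c$ uncountable for every neighbourhood $V$ of $c$ --- is left unresolved, and you say so yourself (``the crux of the whole argument''). As written, that is a genuine gap, not a routine verification, and your sketch of a fix (a simultaneous splitting for all thick $c\in C$, justified only in the model $\omega_1\cup\{c\}$) does not close it.

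The gap can be closed, and your worry about making choices ``for all $c\in C$ at once'' is a red herring: a \emph{single} thick point suffices, because your engine applies to any uncountable regular open subset of $I$, wherever it comes from. This is precisely how the paper finishes (its case (b)): split the one thick gadget as $B_c=O_1\cup O_2$ with $O_1\cap O_2=\emptyset$ and both pieces uncountable. Note that $\overline{O_j}\setminus O_j\subseteq\overline{B_c}\setminus I=\{c\}$ for $j=1,2$, since isolated points in $\overline{O_j}$ lie in $O_j$. Now either every neighbourhood of $c$ meets $O_2$ --- then, since $O_2\subseteq X\setminus\overline{O_1}$, every neighbourhood of $c$ meets $X\setminus\overline{O_1}$, so $c\notin Int\overline{O_1}$ and $O_1$ is regular open and uncountable, done; or some neighbourhood of $c$ misses $O_2$ --- then $c\notin\overline{O_2}$, so $O_2$ is clopen, hence regular open and uncountable, done. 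Thus an arbitrary split of one thick gadget already produces the set $D$ you need, with no simultaneous or model-specific construction. (The paper phrases this dichotomy via quasicontinuity of the function equal to $0$ on $O_1$ and $1$ elsewhere rather than via regular openness, but it is the same dichotomy.) One last small point: your appeal to Lemma 4.2 of \cite{Hol5} requires the set to be non-dense; for regular open $D$ this fails only when $D=X$, i.e. when $X$ is uncountable discrete, a case your finite-indicator argument disposes of directly since then $Q_p(X,\mathbb{R})=\mathbb{R}^X$.
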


\begin{proof} By Lemma \ref{lem11}, it is enough to prove that $X$ has at most countably many isolated points.

 Assume that $X$ has uncountable many isolated points $D$.

Let $D=D_1\cup D_2$ where $D_1\cap D_2=\emptyset$ and $|D_i|>
\aleph_0$ for $i=1,2$. Consider the set $W=Int\overline{D_1}$.
Clear that $\overline{W}\cap D_2=\emptyset$. By Lemma \ref{lem11},
$|W\setminus D_1|\leq \omega$.

Since $X$ is open Whyburn, for every point $d\in W\setminus D_1$
there is an open subset $O_d\subseteq D_1$ such that
$\overline{O_d}\setminus D_1=\{d\}$.

(a) Suppose that for every point $d\in W\setminus D_1$ there is a
neighborhood $V_d$ of $d$ such that  $|O_d\cap V_d|\leq \omega$.
Let $W_d=O_d\cap V_d$. Then $\overline{W_d}\setminus D_1=\{d\}$,
$W_d\subset D_1$ and $|W_d|\leq \omega$.

For every a finite subset $K$ of $W$ we consider the set

$P_K=\bigcup\{\{d\}: d\in K\cap D_1\}\cup \bigcup\{\overline{W_d}:
d\in K\cap (W\setminus D_1)\}$.

Consider the set $C=\{g_{K}: K\in [W]^{<\omega}\}$ of
quasicontinuous functions $g_{K}$ where

 $$ g_{K}:=\left\{
\begin{array}{lcl}
0 \, \, \, \, \, \, \, \, \, \, on \, \, \, \,  P_K\\
1 \, \, \, on \, \, \, X\setminus P_K.\\
\end{array}
\right.
$$

Let $$ g:=\left\{
\begin{array}{lcl}
0 \, \, \, \, \, \, \, \, \, \, on \, \, \, \,  W\\
1 \, \, \, on \, \, \, X\setminus W.\\
\end{array}
\right.
$$

Note that $g\in Q_p(X,\mathbb{R})$ and $g\in \overline{C}$. Since
$Q_p(X,\mathbb{R})$ is Fr\'{e}chet-Urysohn there is a sequence
$\{g_{K_i}: i\in \mathbb{N}\}\subset C$ such that
$g_{K_i}\rightarrow g$ ($i\rightarrow \infty$). Since $D_1$ is
uncountable, there is $z\in D_1\setminus \bigcup\limits_i
P_{K_i}$. Consider $[z,(-\frac{1}{2},\frac{1}{2})]=\{f\in
Q_p(X,\mathbb{R}): f(z)\in (-\frac{1}{2},\frac{1}{2})\}$. Note
that $g\in [z,(-\frac{1}{2},\frac{1}{2})]$ and $g_i\notin
[z,(-\frac{1}{2},\frac{1}{2})]$ for any $i\in \mathbb{N}$
($g_i(z)=1$ for every $i\in \mathbb{N}$), it is a contradiction.

(b) Suppose that there is a point $d\in W\setminus D_1$ such that
$|O_d\cap V_d|>\omega$ for every neighborhood $V_d$ of $d$. Let
$O_d=O_1\cup O_2$ such that $O_1\cap O_2=\emptyset$ and
$|O_i|>\omega$ for $i=1,2$.

There are two cases:

(1) $V_d\cap O_i\neq \emptyset$ for every neighborhood $V_d$ of
$d$ and $i=1,2$;

(2) $V_d\cap O_i=\emptyset$ for some neighborhood $V_d$ of $d$ and
some $i=1,2$.

Suppose that the case (2) is true for $i=1$. Note that in this
case $d\in \overline{O_2}$.

 Then, for cases
(1) and (2), we consider the set $C=\{g_{K}: K\in
[O_1]^{<\omega}\}$ of continuous functions $g_{K}$ where

 $$ g_{K}:=\left\{
\begin{array}{lcl}
0 \, \, \, \, \, \, \, \, \, \, on \, \, \, \,  K\\
1 \, \, \, on \, \, \, X\setminus K.\\
\end{array}
\right.
$$

Let $$ g:=\left\{
\begin{array}{lcl}
0 \, \, \, \, \, \, \, \, \, \, on \, \, \, \,  O_1\\
1 \, \, \, on \, \, \, X\setminus O_1.\\
\end{array}
\right.
$$

Note that $g\in Q_p(X,\mathbb{R})$ (for cases: (1) and (2)) and
$g\in \overline{C}$. Since $Q_p(X,\mathbb{R})$ is
Fr\'{e}chet-Urysohn there is a sequence $\{g_{K_i}: i\in
\mathbb{N}\}\subset C$ such that $g_{K_i}\rightarrow g$
($i\rightarrow \infty$). Since $O_1$ is uncountable, there is
$z\in O_1\setminus \bigcup\limits_i K_i$. Consider
$[z,(-\frac{1}{2},\frac{1}{2})]=\{f\in Q_p(X,\mathbb{R}): f(z)\in
(-\frac{1}{2},\frac{1}{2})\}$. Note that $g\in
[z,(-\frac{1}{2},\frac{1}{2})]$ and $g_i\notin
[z,(-\frac{1}{2},\frac{1}{2})]$ for any $i\in \mathbb{N}$
($g_i(z)=1$ for every $i\in \mathbb{N}$), it is a contradiction.
\end{proof}

Let $I(X)$ denote the set of isolated points of $X$. Note that a
Lusin space has at most countably many isolated points.

\begin{corollary}{\it Let $X$ be an open Whyburn space such that $I(X)$ is an uncountable dense subset in $X$.
Then $Q_p(X,\mathbb{R})$ is not Fr\'{e}chet-Urysohn}.
\end{corollary}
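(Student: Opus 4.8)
The plan is to argue by contradiction using Theorem \ref{th1}. First I would observe that since $I(X)$ is uncountable and $I(X)\subseteq X$, the space $X$ is itself uncountable; together with the hypothesis that $X$ is open Whyburn, this places $X$ squarely in the scope of Theorem \ref{th1}.

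Suppose, toward a contradiction, that $Q_p(X,\mathbb{R})$ is Fr\'{e}chet-Urysohn. Then Theorem \ref{th1} applies directly and yields that $X$ is a Lusin space. By clause (b) of the definition of a Lusin space---equivalently, by the remark recorded immediately after Theorem \ref{th1}---a Lusin space has at most countably many isolated points, so $|I(X)|\le \aleph_0$. This contradicts the assumption that $I(X)$ is uncountable, and hence $Q_p(X,\mathbb{R})$ cannot be Fr\'{e}chet-Urysohn.

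I expect no genuine obstacle here: the corollary is essentially a restatement of Theorem \ref{th1} with the Lusin conclusion unpacked through its isolated-point clause. The only point to verify carefully is that the hypotheses of the theorem are met, namely the uncountability of $X$, which is immediate from $I(X)\subseteq X$ being uncountable. I note that the density of $I(X)$ is not actually used in this argument; uncountability of $I(X)$ alone suffices, so the stated density hypothesis could in principle be weakened.
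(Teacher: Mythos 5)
Your proof is correct and is exactly the argument the paper intends: the corollary is stated immediately after the remark that a Lusin space has at most countably many isolated points, so it follows from Theorem~\ref{th1} precisely as you describe. Your observation that density of $I(X)$ is not actually needed is also accurate.
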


\begin{proposition} Let $X$ be a $k$-Fr\'{e}chet-Urysohn regular
space with countable pseudocharacter. Then $X$ is open Whyburn.
\end{proposition}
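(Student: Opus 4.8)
The plan is to exploit the $k$-Fréchet-Urysohn property to produce a sequence inside the open set $A$ converging to the boundary point $x$, and then to thicken that sequence into an open set $B\subseteq A$ whose closure captures $x$ and no other point of $X\setminus A$. Fix an open $A\subseteq X$ and a point $x\in\overline{A}\setminus A$ (if $\overline{A}\setminus A$ is empty there is nothing to prove). Since $X$ is $k$-Fréchet-Urysohn and $x\in\overline{A}$ with $A$ open, I obtain a sequence $(x_m)\subseteq A$ with $x_m\to x$; note $x_m\neq x$ because $x\notin A$.

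First I would use the countable pseudocharacter together with regularity to build a \emph{regular} decreasing neighborhood filtration of $x$. Writing $\{x\}=\bigcap_n V_n$ with each $V_n$ open, I construct inductively open sets $U_n$ with $x\in U_{n+1}\subseteq\overline{U_{n+1}}\subseteq U_n\cap V_{n+1}$ and $\overline{U_1}\subseteq V_1$; regularity supplies each step. Then $(U_n)$ is decreasing and $\bigcap_n\overline{U_n}=\{x\}$, since $\overline{U_n}\subseteq V_n$. These shells are the tool that forces the closure of $B$ to accumulate only at $x$ off $A$. Because $x_m\to x$ and each $U_k$ is a neighborhood of $x$, cofinitely many $x_m$ lie in $U_k$, so I may choose indices $m_1<m_2<\cdots$ with $y_k:=x_{m_k}\in U_k\cap A$; the subsequence $(y_k)$ still converges to $x$.

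Next, for each $k$ I apply regularity to the point $y_k$ and the open set $U_k\cap A$ containing it to get an open $G_k$ with $y_k\in G_k\subseteq\overline{G_k}\subseteq U_k\cap A$, and I set $B=\bigcup_k G_k$. Then $B$ is open, $B\subseteq A$, and $x\in\overline{B}$ because $y_k\in G_k$ and $y_k\to x$; in particular $x\in\overline{B}\setminus A$. It remains to prove $\overline{B}\setminus A\subseteq\{x\}$, and this is the step I expect to be the main obstacle, since one must control limit points arising from infinitely many of the $G_k$ at once.

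To finish, I take $y\in\overline{B}$ with $y\neq x$ and show $y\in A$. As $\bigcap_k\overline{U_k}=\{x\}$, there is $k_0$ with $y\notin\overline{U_{k_0}}$, so $O:=X\setminus\overline{U_{k_0}}$ is a neighborhood of $y$. For every $k\geq k_0$ the set $G_k$ is disjoint from $O$, because $G_k\subseteq U_k\subseteq U_{k_0}\subseteq\overline{U_{k_0}}$; hence inside $O$ the set $B$ coincides with $\bigcup_{k<k_0}G_k$. Intersecting an arbitrary neighborhood of $y$ with $O$ then shows $y\in\overline{\bigcup_{k<k_0}G_k}=\bigcup_{k<k_0}\overline{G_k}$, a finite union, so $y\in\overline{G_k}$ for some $k<k_0$. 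Since $\overline{G_k}\subseteq A$, this gives $y\in A$, as required, and therefore $\overline{B}\setminus A=\{x\}$, proving that $X$ is open Whyburn.
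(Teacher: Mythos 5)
Your proof is correct and follows essentially the same route as the paper's: a $k$-Fr\'{e}chet-Urysohn sequence in $A$ converging to $x$, a decreasing family of regular neighborhoods of $x$ with $\bigcap_n\overline{U_n}=\{x\}$ obtained from regularity and countable pseudocharacter, and a thickening of a subsequence into open sets $G_k$ with $\overline{G_k}\subseteq U_k\cap A$ whose union is the required $B$. Your tail argument (any $y\neq x$ misses some $\overline{U_{k_0}}$, so only the finitely many $G_k$ with $k<k_0$ can contribute to accumulation at $y$) is just a carefully written version of what the paper achieves by placing the sequence terms in the disjoint rings $V_i\setminus\overline{V_{i+1}}$.
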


\begin{proof} Let $x\in \overline{U}\setminus U$ for an open set
$U$ in $X$. Since $X$ is $k$-Fr\'{e}chet-Urysohn, there is a
sequence $\{x_n: n\in\mathbb{N}\}\subset U$ such that
$x_n\rightarrow x$ ($n\rightarrow \infty$). Since $X$ is a regular
space with countable pseudocharacter, there is a sequence $\{V_i:
i\in \mathbb{N}\}$ of open neighborhoods of $x$ such that $\bigcap
V_i=\{x\}$ and $\overline{V_{i+1}}\subset V_i$ for each $i\in
\mathbb{N}$. We can assume that $x_i\in V_i\setminus
\overline{V_{i+1}}$. Let $W_i$ be a neighborhood of $x_i$ such
that $\overline{W_i}\subset U\cap (V_i\setminus
\overline{V_{i+1}})$. Then $W=\bigcup\{W_i: i\in
\mathbb{N}\}\subset U$ and $\overline{W}\setminus U=\{x\}$.

\end{proof}

\begin{corollary}\label{cor1} {\it Let $X$ be an uncountable $k$-Fr\'{e}chet-Urysohn (Fr\'{e}chet-Urysohn) regular
space with countable pseudocharacter such that $Q_p(X,\mathbb{R})$
is  Fr\'{e}chet-Urysohn. Then $X$ is a Lusin space}.
\end{corollary}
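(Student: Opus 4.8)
The plan is to realize this corollary as an immediate chaining of the preceding Proposition with Theorem~\ref{th1}, so that essentially all the work lies in matching hypotheses rather than in any new construction. First I would dispose of the parenthetical: the two listed cases ($k$-Fr\'{e}chet-Urysohn and Fr\'{e}chet-Urysohn) are not independent, since every Fr\'{e}chet-Urysohn space is $k$-Fr\'{e}chet-Urysohn. Indeed, given an open set $U$ and a point $x\in\overline{U}$, applying the defining property of Fr\'{e}chet-Urysohn to the set $A=U$ produces a sequence in $U$ converging to $x$, which is exactly the $k$-Fr\'{e}chet-Urysohn requirement. Hence in either version of the hypothesis we may assume that $X$ is $k$-Fr\'{e}chet-Urysohn.

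Next I would invoke the Proposition immediately above: since $X$ is a $k$-Fr\'{e}chet-Urysohn regular space with countable pseudocharacter, $X$ is open Whyburn. This is the only place where the regularity and the countable pseudocharacter are actually used; the Proposition supplies, for each $x\in\overline{U}\setminus U$ with $U$ open, the nested base $\{V_i\}$ at $x$ with $\bigcap V_i=\{x\}$ together with the witnessing open set $W\subseteq U$ satisfying $\overline{W}\setminus U=\{x\}$, which is precisely the open Whyburn condition.

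Finally, with $X$ now known to be an uncountable open Whyburn space and $Q_p(X,\mathbb{R})$ Fr\'{e}chet-Urysohn by hypothesis, Theorem~\ref{th1} applies verbatim and yields that $X$ is a Lusin space, completing the argument.

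As for obstacles, there is essentially no technical difficulty here, since every ingredient has already been established earlier in the paper. The one point requiring explicit care is the implication that Fr\'{e}chet-Urysohn spaces are $k$-Fr\'{e}chet-Urysohn; this observation is what allows the Fr\'{e}chet-Urysohn clause in parentheses to genuinely reduce to the hypothesis demanded by the Proposition, and without recording it the parenthetical case would be left unjustified.
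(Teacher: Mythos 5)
Your proposal is correct and follows exactly the route the paper intends: the corollary is stated immediately after the Proposition (a $k$-Fr\'{e}chet-Urysohn regular space with countable pseudocharacter is open Whyburn) and Theorem~\ref{th1}, and chaining them is the whole argument, with your explicit remark that Fr\'{e}chet-Urysohn implies $k$-Fr\'{e}chet-Urysohn correctly disposing of the parenthetical case. Nothing is missing.
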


In particular, if $X$ is an uncountable first-countable regular
space such that $Q_p(X,\mathbb{R})$ is Fr\'{e}chet-Urysohn then
$X$ is a Lusin space.

\medskip

Note that if $X$ is Tychonoff and $C_p(X,\mathbb{R})$ is
Fr\'{e}chet-Urysohn then $C_p(X^2,\mathbb{R})$ is
Fr\'{e}chet-Urysohn \cite{GN2}. However, this is not true for
quasicontinuous functions.

\begin{corollary} {\it Let $X$ be an uncountable first-countable regular space such that $Q_p(X,\mathbb{R})$ is Fr\'{e}chet-Urysohn.
Then $Q_p(X^2,\mathbb{R})$ is not Fr\'{e}chet-Urysohn space}.
\end{corollary}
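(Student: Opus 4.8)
The plan is to argue by contradiction and reduce everything to Lemma \ref{lem11}. The hypotheses are exactly those of the remark following Corollary \ref{cor1}, so $X$ is a Lusin space; in particular $X$ is uncountable and its set of isolated points $I(X)$ is countable, whence $N:=X\setminus I(X)$ is uncountable. I would then assume, towards a contradiction, that $Q_p(X^2,\mathbb{R})$ is Fr\'echet-Urysohn and exhibit an uncountable nowhere dense subset of $X^2$, which Lemma \ref{lem11} forbids once we know that $X^2$ is open Whyburn.

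To apply Lemma \ref{lem11} to $X^2$ I first check that $X^2$ is open Whyburn. Since both first countability and regularity (for Hausdorff spaces) are preserved by finite products, $X^2$ is again a first-countable regular space; first countability yields the $k$-Fr\'echet-Urysohn property and countable pseudocharacter, so the Proposition preceding Corollary \ref{cor1} applies and gives that $X^2$ is open Whyburn.

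The core step is to produce the uncountable nowhere dense set. I take the partial diagonal $\Delta_N=\{(x,x):x\in N\}$, which is uncountable because $N$ is. Since $X$ is Hausdorff, the full diagonal $\Delta=\{(x,x):x\in X\}$ is closed, so $\overline{\Delta_N}\subseteq\Delta$. It then remains to see that $\overline{\Delta_N}$ has empty interior: if a basic open box $U\times V$ were contained in $\overline{\Delta_N}\subseteq\Delta$, then $x=y$ for all $x\in U$ and $y\in V$, forcing $U=V=\{x\}$ with $x$ an isolated point of $X$; but such an isolated pair $(x,x)$ lying in $\overline{\Delta_N}$ must already belong to $\Delta_N$, i.e. $x\in N$, contradicting that $x$ is isolated. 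Hence $Int(\overline{\Delta_N})=\emptyset$ and $\Delta_N$ is an uncountable nowhere dense subset of $X^2$.

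The main obstacle, and the only place where care is really needed, is precisely this last verification that the closure $\overline{\Delta_N}$ contains no non-degenerate open box; everything else is bookkeeping with known productivity facts. With $X^2$ open Whyburn and $\Delta_N$ uncountable nowhere dense, Lemma \ref{lem11} applied to $X^2$ is contradicted, so $Q_p(X^2,\mathbb{R})$ cannot be Fr\'echet-Urysohn.
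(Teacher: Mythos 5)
Your proof is correct and follows essentially the same strategy as the paper's: derive that $X$ is Lusin, note that $X^2$ is again uncountable, first-countable and regular (hence open Whyburn by the Proposition preceding Corollary \ref{cor1}), and contradict the Fr\'{e}chet-Urysohn assumption on $Q_p(X^2,\mathbb{R})$ by producing an uncountable nowhere dense subset of $X^2$. The only real difference is the witness: the paper obtains a non-isolated point $x_0\in X$ from non-discreteness (using that $\mathbb{R}^{\kappa}$ is not Fr\'{e}chet-Urysohn for $\kappa\geq\omega_1$) and asserts that $X^2$ is then not Lusin --- implicitly via the slice $\{x_0\}\times X$ --- whereas you take the partial diagonal $\Delta_N$ over the non-isolated points and invoke Lemma \ref{lem11} directly; your nowhere-density check for $\Delta_N$ is correct, and your route even makes the $\mathbb{R}^{\kappa}$ step unnecessary, since Lusin property (b) already gives uncountably many non-isolated points.
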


\begin{proof}
Since $\mathbb{R}^{\kappa}$ is not Fr\'{e}chet-Urysohn for any
$\kappa\geq\omega_1$, $X$ is not discrete space provided that $X$
is an uncountable and $Q_p(X,\mathbb{R})$ is Fr\'{e}chet-Urysohn.
Clear that $X^2=X\times X$ is not Lusin space provided that $X$ is
a Lusin space and $X$ with a non-isolated point.
\end{proof}
\medskip

By Theorem \ref{th1} and results in \cite{Kun} (Lemmas 1.2 and
1.5), we get that if $X$ is an uncountable open Whyburn Hausdorff
semi-regular space such that $Q_p(X,\mathbb{R})$ is
Fr\'{e}chet-Urysohn then $X$ is hereditarily Lindel\"{o}f (hence,
$X$ is perfect normal (see 3.8.A. in \cite{Eng})) and $X$ is
zero-dimensional.

\medskip

Since a Lusin space $X$ is hereditarily Lindel\"{o}f and
Hausdorff, it has cardinality at most $\mathfrak{c}=2^{\omega}$
(de Groot, \cite{Gro}).

\begin{corollary} {\it Let $X$ be an open Whyburn space of  cardinality $> \mathfrak{c}$.
Then $Q_p(X,\mathbb{R})$ is not Fr\'{e}chet-Urysohn space}.
\end{corollary}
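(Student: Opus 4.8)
The plan is to argue by contradiction, assembling the two ingredients that the text has just placed in front of us: Theorem~\ref{th1}, which pins down the structure of $X$ whenever $Q_p(X,\mathbb{R})$ is Fr\'{e}chet-Urysohn, and the de Groot cardinality bound for Lusin spaces recorded immediately above the statement.

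First I would suppose, toward a contradiction, that $Q_p(X,\mathbb{R})$ \emph{is} Fr\'{e}chet-Urysohn. The hypothesis $|X|>\mathfrak{c}$ in particular forces $|X|>\aleph_0$, so $X$ is an uncountable open Whyburn space, which is exactly the situation covered by Theorem~\ref{th1}. Applying that theorem directly yields that $X$ is a Lusin space. This is the only place where the full strength of the earlier work is invoked, and checking that its hypotheses are met is immediate: $X$ is open Whyburn by assumption, uncountable by the cardinality bound, and $Q_p(X,\mathbb{R})$ is Fr\'{e}chet-Urysohn by our contradiction hypothesis.

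Next I would invoke the cardinality constraint on Lusin spaces. As noted just before the statement, a Lusin space is hereditarily Lindel\"{o}f and Hausdorff (via Lemmas~1.2 and~1.5 of \cite{Kun}), and de Groot's theorem \cite{Gro} then gives $|X|\le \mathfrak{c}=2^{\omega}$. This contradicts the standing assumption $|X|>\mathfrak{c}$, so the contradiction hypothesis must fail and $Q_p(X,\mathbb{R})$ is not Fr\'{e}chet-Urysohn.

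I do not expect any genuine obstacle here: the corollary is a formal consequence of Theorem~\ref{th1} combined with the de Groot bound, and the entire substantive content lives in those cited results rather than in the deduction itself. The one point worth stating explicitly, to keep the logic clean, is that $|X|>\mathfrak{c}$ is used twice---once merely to guarantee uncountability so that Theorem~\ref{th1} applies, and once, in sharper form, to clash with the $\le\mathfrak{c}$ conclusion. No additional set-theoretic assumption such as Suslin's Hypothesis is needed for this particular corollary, since the argument stays entirely within $ZFC$.
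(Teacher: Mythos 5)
Your proof is correct and follows exactly the route the paper intends: assume Fr\'{e}chet-Urysohn, apply Theorem~3.4 (since $|X|>\mathfrak{c}$ forces uncountability) to conclude $X$ is a Lusin space, then invoke the de Groot bound $|X|\le\mathfrak{c}$ for hereditarily Lindel\"{o}f Hausdorff spaces to reach a contradiction. Your additional remarks (the double use of the cardinality hypothesis, and that no Suslin Hypothesis is needed) are accurate and consistent with the paper's presentation.
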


In particular, if $X$ is first-countable regular space of
cardinality $> \mathfrak{c}$ then $Q_p(X,\mathbb{R})$ is not
Fr\'{e}chet-Urysohn space.

\medskip

Let us note however that Kunen (Theorem 0.0. in \cite{Kun}) has
shown that under {\it Suslin's Hypothesis} (${\bf SH}$) there are
no Lusin spaces at all. K.Kunen proved that under $ {\bf
MA}(\aleph_1,\aleph_0$-centred$)$ there is a Lusin space if and
only if there is a Suslin line.

\medskip

The Suslin Hypothesis is neither provable nor refutable in ${\bf
ZFC}$, even if we assume ${\bf CH}$ or ${\bf \neg CH}$. A typical
model of ${\bf ZFC+\neg SH}$ is the G\"{o}del constructible
universe ${\bf L}$, while a typical model of ${\bf ZFC+SH}$ is the
Solovay-Tennenbaum model of ${\bf ZFC+MA(\aleph_1)}$ (see p.266 in
\cite{handbook}).

\begin{theorem} $({\bf SH}).$ Let $X$ be an open Whyburn space. The space $Q_p(X,\mathbb{R})$ is Fr\'{e}chet-Urysohn if and only if
$X$ is countable.

\end{theorem}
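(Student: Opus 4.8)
The plan is to prove both directions of the equivalence, with the forward direction being the substantive one. The easy direction is that if $X$ is countable, then $Q_p(X,\mathbb{R})$ embeds as a subspace of $\mathbb{R}^X=\mathbb{R}^\omega$, which is metrizable and hence Fr\'{e}chet-Urysohn; so $Q_p(X,\mathbb{R})$ is Fr\'{e}chet-Urysohn as a subspace of a Fr\'{e}chet-Urysohn space. The real work is the forward direction: assuming $X$ is an open Whyburn space with $Q_p(X,\mathbb{R})$ Fr\'{e}chet-Urysohn, I must show $X$ is countable.

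First I would argue by contradiction, supposing $X$ is uncountable. Then Theorem~\ref{th1} applies directly and tells us that $X$ is a Lusin space (in the sense of Kunen). This is the decisive structural consequence: uncountability plus the Fr\'{e}chet-Urysohn hypothesis on $Q_p(X,\mathbb{R})$ forces every nowhere dense set to be countable, forces at most countably many isolated points, and forces uncountability, which are exactly conditions (a), (b), (c) of the Lusin space definition. So the proof reduces to showing that, under Suslin's Hypothesis, no such Lusin space can exist.

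The second step is to invoke the set-theoretic input already recorded in the excerpt: Kunen (Theorem 0.0 in \cite{Kun}) showed that under ${\bf SH}$ there are no Lusin spaces at all. Combining this with the previous step yields an immediate contradiction, since we produced a Lusin space $X$ while ${\bf SH}$ forbids their existence. Therefore the assumption that $X$ is uncountable is untenable, and $X$ must be countable. I should be slightly careful that Theorem~\ref{th1} is stated for open Whyburn spaces without an explicit regularity hypothesis, and that the Lusin-space nonexistence result of Kunen applies in the same generality (Kunen's Lusin spaces are only required to be Hausdorff); since the excerpt has already reduced to the Hausdorff semi-regular case and Theorem~\ref{th1} delivers a Lusin space in Kunen's sense, this matching of hypotheses is clean.

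The main obstacle, then, is not in the present theorem but is entirely absorbed into Theorem~\ref{th1}: all the genuine topological difficulty (the transfinite constructions of quasicontinuous witness functions separating points, and the diagonalization against a convergent sequence using that only countably many coordinates are captured) has been done there. Here the only thing to watch is that the two imported black boxes compose correctly, namely that the Lusin space produced by Theorem~\ref{th1} is precisely the object that ${\bf SH}$ excludes via Kunen's theorem, so that the contradiction is legitimate rather than a hypothesis mismatch. Given that alignment, the proof is a short two-line deduction: uncountable open Whyburn with $Q_p$ Fr\'{e}chet-Urysohn $\Rightarrow$ Lusin space by Theorem~\ref{th1}, contradicting ${\bf SH}$; hence countable, and the converse is the metrizability of $\mathbb{R}^\omega$.
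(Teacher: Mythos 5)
Your proposal is correct and matches the paper's intended argument exactly: the theorem is stated immediately after the remark that Kunen's Theorem 0.0 excludes Lusin spaces under ${\bf SH}$, so the proof is precisely Theorem~\ref{th1} (uncountable open Whyburn plus Fr\'{e}chet-Urysohn $Q_p$ yields a Lusin space) contradicting ${\bf SH}$, plus the easy converse for countable $X$. Your converse via the embedding $Q_p(X,\mathbb{R})\subseteq\mathbb{R}^{\omega}$ is a harmless variant of the paper's citation of first countability of $Q_p(X,\mathbb{R})$ for countable $X$.
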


In particular, for first-countable regular spaces, we have the
following corollary.
\medskip
\begin{corollary} $({\bf SH}).$ {\it Let $X$ be a first-countable regular space. The space $Q_p(X,\mathbb{R})$ is Fr\'{e}chet-Urysohn if and only if
$X$ is countable}.
\end{corollary}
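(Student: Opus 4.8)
The statement is a biconditional, and the plan is to prove the two implications separately, noting that almost all of the substance of the nontrivial direction is already packaged in Theorem \ref{th1}. The converse ``if $X$ is countable then $Q_p(X,\mathbb{R})$ is Fr\'{e}chet--Urysohn'' holds in $ZFC$ and does not even use the open Whyburn hypothesis; the direct implication ``if $Q_p(X,\mathbb{R})$ is Fr\'{e}chet--Urysohn then $X$ is countable'' is where Suslin's Hypothesis enters.

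For the converse, suppose $X$ is countable. Then $Q_p(X,\mathbb{R})$ is a subspace of the product $\mathbb{R}^X$ carrying the topology of pointwise convergence, which is exactly the product topology. Since $X$ is countable, $\mathbb{R}^X$ is a countable product of metrizable spaces and hence is itself metrizable (for instance via the standard sum of weighted coordinate distances after enumerating $X$). Metrizability is hereditary, so $Q_p(X,\mathbb{R})$ is metrizable, therefore first-countable, and in particular Fr\'{e}chet--Urysohn. No set-theoretic assumption is needed for this half.

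For the direct implication I would argue by contradiction. Assume $X$ is open Whyburn and $Q_p(X,\mathbb{R})$ is Fr\'{e}chet--Urysohn, and suppose toward a contradiction that $X$ is uncountable. Then $X$ is an uncountable open Whyburn space whose function space $Q_p(X,\mathbb{R})$ is Fr\'{e}chet--Urysohn, so Theorem \ref{th1} applies and yields that $X$ is a Lusin space in the sense of Kunen. On the other hand, under Suslin's Hypothesis, Kunen's Theorem 0.0 in \cite{Kun} asserts that no Lusin space exists at all. This is the contradiction, so $X$ must be countable. Combining the two directions gives the equivalence.

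The genuine mathematical work here --- showing that the Fr\'{e}chet--Urysohn property of $Q_p(X,\mathbb{R})$ forces the three defining conditions of a Lusin space --- has already been carried by Lemma \ref{lem11} (nowhere dense sets are countable) and Theorem \ref{th1} (at most countably many isolated points), so in this final theorem there is no single hard computation. The only point requiring care, and hence the closest thing to an obstacle, is bookkeeping of hypotheses: one must confirm that what is available (uncountable, open Whyburn, Hausdorff, $Q_p$ Fr\'{e}chet--Urysohn) is exactly what Theorem \ref{th1} consumes, and that Kunen's non-existence result is formulated for the very notion of Lusin space that Theorem \ref{th1} produces. Since the paper has already reduced to the Hausdorff semi-regular setting and Theorem \ref{th1} concludes with precisely Kunen's definition, these checks are immediate, and SH is invoked only through the single black-box fact that Lusin spaces do not exist.
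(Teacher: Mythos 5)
Your argument, as written, proves the paper's theorem for \emph{open Whyburn} spaces, not the stated corollary. The corollary's hypothesis is that $X$ is first-countable and regular; your forward direction begins ``Assume $X$ is open Whyburn and $Q_p(X,\mathbb{R})$ is Fr\'{e}chet--Urysohn,'' silently substituting a different hypothesis, and nowhere do you connect the two. The entire content of this corollary --- the reason it is stated separately from the theorem --- is precisely that bridge: every first-countable regular space is open Whyburn. That implication is not automatic; the paper proves it as a separate proposition (a $k$-Fr\'{e}chet--Urysohn regular space with countable pseudocharacter is open Whyburn), and its proof is a genuine construction: given an open set $U$ and $x\in\overline{U}\setminus U$, first-countability gives a sequence $(x_n)\subset U$ converging to $x$; regularity plus countable pseudocharacter gives nested open neighborhoods $V_i$ of $x$ with $\bigcap_i V_i=\{x\}$ and $\overline{V_{i+1}}\subset V_i$; after arranging $x_i\in V_i\setminus\overline{V_{i+1}}$, one chooses neighborhoods $W_i$ of $x_i$ with $\overline{W_i}\subset U\cap(V_i\setminus\overline{V_{i+1}})$ and verifies that $W=\bigcup_i W_i$ is an open subset of $U$ with $\overline{W}\setminus U=\{x\}$. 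Without this step (or a citation of the paper's proposition), Theorem \ref{th1} cannot be applied to $X$ and your contradiction never gets off the ground.

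The rest of your proposal is sound: the backward direction (for countable $X$, the space $Q_p(X,\mathbb{R})$ is a subspace of the metrizable space $\mathbb{R}^{X}\cong\mathbb{R}^{\omega}$, hence metrizable and Fr\'{e}chet--Urysohn) is correct, and is even a bit more self-contained than the paper's citation of Hol\'{a}--Hol\'{y} for first-countability; and the use of Kunen's Theorem 0.0 under ${\bf SH}$ to exclude Lusin spaces is exactly the paper's route. So the fix is local but essential: insert the proof (or invocation) of ``first-countable regular $\Rightarrow$ open Whyburn'' before appealing to Theorem \ref{th1}.
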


However, the following result  holds in ${\bf ZFC}$.

\begin{theorem}\label{cor210}{\it Let $X$ be a metrizable space. The space $Q_p(X,\mathbb{R})$ is
Fr\'{e}chet-Urysohn if and only if $X$ is countable}.
\end{theorem}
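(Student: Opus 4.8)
The plan is to prove both directions, with the nontrivial one being the implication that if $Q_p(X,\mathbb{R})$ is Fr\'{e}chet-Urysohn then a metrizable $X$ must be countable. The reverse direction is routine: if $X$ is countable, then $Q(X,\mathbb{R})\subseteq\mathbb{R}^X$ with $|X|\le\omega$, so $Q_p(X,\mathbb{R})$ embeds in the metrizable (hence Fr\'{e}chet-Urysohn) space $\mathbb{R}^\omega$, and any subspace of a Fr\'{e}chet-Urysohn space is Fr\'{e}chet-Urysohn. So I would dispatch that in one sentence.

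For the forward direction, the strategy is to reduce to the machinery already built in the excerpt, which works in $ZFC$ without invoking Suslin's Hypothesis. First I observe that a metrizable space is first-countable, regular, and has countable pseudocharacter, and that a metrizable space is Fr\'{e}chet-Urysohn, hence in particular $k$-Fr\'{e}chet-Urysohn. Thus by the Proposition preceding Corollary \ref{cor1}, $X$ is open Whyburn. Now suppose for contradiction that $X$ is uncountable. Then Corollary \ref{cor1} applies and tells us that $X$ is a Lusin space. This is exactly where the metrizable hypothesis lets me avoid $SH$: the whole point is that the existence of a Lusin space is independent of $ZFC$ in general, but a \emph{metrizable} Lusin space can be ruled out outright.

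The key step — and the main obstacle — is to derive a contradiction from the assertion that $X$ is a metrizable Lusin space. I would argue as follows. A Lusin space has every nowhere dense subset countable and at most countably many isolated points, yet is itself uncountable. In a metrizable space, the set $I(X)$ of isolated points is open, and since a Lusin space has $|I(X)|\le\omega$, the remaining set $X\setminus\overline{I(X)}$ is an uncountable metrizable space with no isolated points (a crowded, i.e. dense-in-itself, metrizable space). Call it $Y$; it is itself metrizable and uncountable. The contradiction comes from the fact that an uncountable crowded metrizable space cannot have all nowhere dense sets countable: a metrizable space without isolated points contains a copy of the Cantor set (or, more elementarily, one constructs by a binary-branching scheme of shrinking closed balls an embedded homeomorph of $2^\omega$), and this Cantor set is a nowhere dense subset of $Y$ of size $\mathfrak{c}$, contradicting that nowhere dense sets in a Lusin space are countable. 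Equivalently, one may simply recall the known fact (cf. Kunen, \cite{Kun}) that no metrizable space is Lusin, since in the metrizable setting the Lusin property forces the space to be both second countable and of size $\le\omega$.

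I would therefore structure the write-up as: (i) reverse direction in one line; (ii) verify that metrizable $\Rightarrow$ the hypotheses of Corollary \ref{cor1} are met; (iii) assume $X$ uncountable, invoke Corollary \ref{cor1} to get a Lusin space; (iv) contradict this by embedding a Cantor set as an uncountable nowhere dense subset inside the crowded part of $X$. The delicate point to get right is step (iv): I must ensure the embedded Cantor set is genuinely nowhere dense in $X$ (not merely in $Y$), but since $Y=X\setminus\overline{I(X)}$ is closed with empty interior complement issues handled, and the Cantor set sits inside a crowded metrizable space where it is nowhere dense, its closure stays nowhere dense in $X$, completing the contradiction.
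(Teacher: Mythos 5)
Your steps (i)--(iii) are sound and coincide with the paper's route: metrizability gives first countability, regularity and the $k$-Fr\'{e}chet-Urysohn property, hence $X$ is open Whyburn, and if $X$ were uncountable then Corollary \ref{cor1} makes it a Lusin space. The fatal problem is step (iv). It is \emph{not} a theorem of ${\bf ZFC}$ that there are no metrizable Lusin spaces: under ${\bf CH}$ a dense Lusin set $L\subset\mathbb{R}$ (an uncountable set meeting every nowhere dense subset of $\mathbb{R}$ in a countable set) is precisely such a space --- it is uncountable, metrizable, crowded, and any $A\subseteq L$ nowhere dense in $L$ has closure nowhere dense in $\mathbb{R}$ (because $L$ is dense), so $A$ is countable. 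Hence no argument from ``metrizable $+$ Lusin'' alone can finish the proof; that conjunction is consistent. Concretely, your Cantor-set construction breaks down exactly where you flagged the delicate point, but not for the reason you flagged: a binary-branching scheme of shrinking closed balls produces points of the \emph{completion} of $Y$, not of $Y$ itself, and without completeness the nested intersections may be empty. An uncountable crowded metrizable space need not contain any copy of $2^{\omega}$: a Bernstein set is the standard counterexample, and a Lusin set under ${\bf CH}$ is another (a Cantor subset of it would be an uncountable trace on a nowhere dense set). The attribution to Kunen is also backwards --- in \cite{Kun} the classical Lusin sets are the motivating \emph{examples} of Lusin spaces, not objects ruled out.

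The paper escapes this trap by using a second consequence of the hypothesis which your outline discards: since $C_p(X,\mathbb{R})$ is a subspace of $Q_p(X,\mathbb{R})$, it is Fr\'{e}chet-Urysohn as well, so by Gerlits--Nagy \cite{GN2} the space $X$ is a $\gamma$-space. A Lusin subspace of a metrizable space is (being hereditarily Lindel\"{o}f and zero-dimensional) a Lusin \emph{set} in $\mathbb{R}$, and the contradiction the paper derives is a genuine ${\bf ZFC}$ incompatibility between two smallness properties: every $\gamma$-space $X\subset\mathbb{R}$ is ``always first category'' (perfectly meager) \cite{Ser}, whereas no Lusin set is (p.~159 in \cite{GN}). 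So your proof needs this extra $\gamma$-space ingredient (or something equivalent) to repair step (iv); without it, the statement you want there is independent of ${\bf ZFC}$.
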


\begin{proof} Note that a Lusin subspace of a metrizable space is a
Lusin set:{\it an uncountable subset of $\mathbb{R}$ that meets
every nowhere dense set in a countable set}. Hence, if
$Q_p(X,\mathbb{R})$ is Fr\'{e}chet-Urysohn then $X$ is a Lusin set
and it is a $\gamma$-space. But any $\gamma$-space $X\subset
\mathbb{R}$ is {\it always first category} (see Definition in
\cite{Ser})  and a Lusin set is not {\it always first category}
(p. 159 in \cite{GN}). Hence, $X$ is countable.

If $X$ is countable then $Q_p(X,\mathbb{R})$ is first countable
(Theorem 4.1 in \cite{Hol5}) and, hence, $Q_p(X,\mathbb{R})$ is
Fr\'{e}chet-Urysohn.

\end{proof}

\section{Selection principle $S_1$ and Fr\'{e}chet-Urysohn at the point ${\bf 0}$}

Let $\mathcal{A}$ and $\mathcal{B}$ be collections of covers of a
topological space $X$.

\medskip

The symbol $S_1(\mathcal{A}, \mathcal{B})$ denotes the selection
principle that for each sequence $\langle \mathcal{U}_n : n \in
\mathbb{N} \rangle$ of elements of $\mathcal{A}$ there exists a
sequence $\langle U_n : n \in \mathbb{N} \rangle$ such that for
each $n$, $U_n \in \mathcal{U}_n$ and $\{U_n : n \in \mathbb{N}\}
\in \mathcal{B}$ (see \cite{H1}).

\medskip

In this paper $\mathcal{A}$ and $\mathcal{B}$ will be collections
of the following  covers of a space $X$:


$\Omega$ : the collection of open $\omega$-covers of $X$.

$\Gamma$ : the collection of open $\gamma$-covers of $X$.

$\Omega^s$ : the collection of minimally bounded  $\omega$-covers
of $X$.

$\Gamma^s$ : the collection of minimally bounded  $\gamma$-covers
of $X$.

\medskip

In \cite{GN2}, it is proved that $C_p(X,\mathbb{R})$ is
Fr\'{e}chet-Urysohn if and only if $X$ has the property
$S_1(\Omega,\Gamma)$.

\medskip

\begin{lemma}\label{lem121} Let $Q_p(X,\mathbb{R})$ be Fr\'{e}chet-Urysohn at the point ${\bf 0}$. Then
$X$ has the property $S_1(\Omega^s, \Gamma^s)$.

\end{lemma}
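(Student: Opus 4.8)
The plan is to translate the Fr\'{e}chet--Urysohn property at $\mathbf{0}$ directly into the selection statement $S_1(\Omega^s,\Gamma^s)$ by encoding minimally bounded $\omega$-covers as sets of quasicontinuous functions accumulating at $\mathbf{0}$. So suppose we are given a sequence $\langle \mathcal{U}_n : n\in\mathbb{N}\rangle$ of minimally bounded $\omega$-covers of $X$. For each $n$ and each finite $K\in[X]^{<\omega}$ choose some $U_{n,K}\in\mathcal{U}_n$ with $K\subseteq U_{n,K}$ (possible by the $\omega$-cover property), and associate to it the quasicontinuous function that is $0$ on $U_{n,K}$ and $1$ off $U_{n,K}$. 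Here I would invoke the characteristic-function construction recalled just before Proposition~\ref{pr11}: since each $U_{n,K}$ is minimally bounded (and we may assume not dense, else $\mathcal{U}_n$ contains a single set covering everything and the selection is trivial for that $n$), the function $\chi_{X\setminus U_{n,K}}$ is quasicontinuous.

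The next step is to build a single subset $A\subseteq Q_p(X,\mathbb{R})$ from all these functions and check that $\mathbf{0}\in\overline{A}$. Concretely, I would let $A_n=\{\,\chi_{X\setminus U}: U\in\mathcal{U}_n\,\}$ (or the finer set indexed by finite $K$) and take $A=\bigcup_n A_n$; given any basic neighborhood $[x_1,\dots,x_m;(-\varepsilon,\varepsilon)]$ of $\mathbf{0}$, the finite set $\{x_1,\dots,x_m\}$ lies in some $U\in\mathcal{U}_n$, so $\chi_{X\setminus U}$ vanishes on it and hence lands in the neighborhood. Thus $\mathbf{0}\in\overline{A}$. By Fr\'{e}chet--Urysohn-ness at $\mathbf{0}$ there is a sequence in $A$ converging to $\mathbf{0}$; each term is $\chi_{X\setminus U_k}$ for some $U_k$ belonging to one of the covers. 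The crucial bookkeeping is to arrange that we may choose exactly one $U_n$ from each $\mathcal{U}_n$: this is where the diagonalization must be handled carefully, since a priori the convergent sequence could draw repeatedly from the same $\mathcal{U}_n$ and skip others.

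The main obstacle, then, is the index-matching between the convergent sequence and the sequence of covers. The standard device for $S_1$-type statements is to pass to a telescoped family: replace $\mathcal{U}_n$ by covers $\mathcal{V}_n$ built from $\mathcal{U}_1,\dots,\mathcal{U}_n$ (for instance letting $\mathcal{V}_n$ consist of sets $U_1\cap\dots\cap U_n$ with $U_j\in\mathcal{U}_j$, which remain minimally bounded $\omega$-covers after the semi-regularity reductions), run the argument on $\langle\mathcal{V}_n\rangle$, and then read off from the convergent sequence a single selection $U_n\in\mathcal{U}_n$ for each $n$. I expect the verification that these intersections are again minimally bounded $\omega$-covers to require the quasi-regular/semi-regular hypotheses standing throughout the paper, and this is the step I would scrutinize most.

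Finally, once a sequence $\chi_{X\setminus U_{n_k}}\to\mathbf{0}$ is extracted with the $U_{n_k}$ coming from distinct covers (one per index after the telescoping), I would show $\{U_n : n\in\mathbb{N}\}$ is a $\gamma$-cover. Fix $x\in X$; convergence to $\mathbf{0}$ at the point $x$ means $\chi_{X\setminus U_{n_k}}(x)<1/2$ for all large $k$, i.e. $x\in U_{n_k}$ for all but finitely many $k$. That is precisely the $\gamma$-cover condition, and since every $U_n$ is minimally bounded, the selected family lies in $\Gamma^s$. This yields the required selector and completes the verification of $S_1(\Omega^s,\Gamma^s)$.
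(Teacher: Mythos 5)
Your encoding of covers by two-valued quasicontinuous functions, the verification that $\mathbf{0}\in\overline{A}$, and the final step reading off a $\gamma$-cover from pointwise convergence all follow the paper's skeleton. But there is a genuine gap at exactly the point you flagged as ``the crucial bookkeeping,'' and your proposed fix does not close it. Because you use the \emph{unweighted} characteristic functions $\chi_{X\setminus U}$, the set $A=\bigcup_n A_n$ retains no trace of which cover a function came from, and nothing forces the sequence handed to you by the Fr\'{e}chet-Urysohn property to move through the covers: it may be drawn entirely from $A_1$. This is not a scenario you can argue away; if $\mathcal{U}_1$ happens to contain a countable $\gamma$-subcover $\{U_k\}$, then $\chi_{X\setminus U_k}\to\mathbf{0}$ pointwise, and the Fr\'{e}chet-Urysohn property is free to return precisely that sequence, from which no selection out of $\mathcal{U}_n$ for $n\ge 2$ can be extracted. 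Telescoping does not repair this: replacing $\mathcal{U}_n$ by $\mathcal{V}_n=\{U_1\cap\dots\cap U_n : U_j\in\mathcal{U}_j\}$ helps only when the cover-indices of the extracted sequence are unbounded (then the components of the intersections yield one set per cover); it is powerless against the bounded-index failure just described. The device the paper uses, and which your proof is missing, is to weight the functions by the index of the cover: the functions attached to the $i$-th cover take the value $\frac{1}{i}$ (not $0$) on $V$ and $1$ off $V$. Then every member of $A_i$ is bounded below by $\frac{1}{i}$, so pointwise convergence to $\mathbf{0}$ at even a single point forces the cover-indices of the sequence to tend to infinity, i.e.\ each $A_i$ contributes only finitely many terms; this is the control that legitimizes reading the extracted sequence as (essentially) one function $f_{i,V_i}$ per cover --- the paper suppresses the remaining re-indexing, but without the weighting there is nothing to re-index. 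The weighting also gives $\mathbf{0}\in\overline{A}\setminus A$ outright, with no side-cases about dense members of the covers.

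There is a second, independent defect in the telescoping step, the very one you said you would scrutinize most: $\Omega^s$ consists of covers by \emph{minimally bounded} sets, and this class is not closed under finite intersections, so your $\mathcal{V}_n$ need not belong to $\Omega^s$ at all. For instance, in $X=\mathbb{R}$ the sets $[0,1]$ and $[1,2]$ are minimally bounded, but $[0,1]\cap[1,2]=\{1\}$ is not even semi-open; minimal boundedness coincides with regular openness (where intersections do behave well) only for open sets. The paper's weighting trick makes intersections unnecessary: it works with the given covers $\mathcal{U}_i$ directly, which is why its argument stays inside $\Omega^s$ and $\Gamma^s$ throughout, whereas your route exits the class it needs to remain in.
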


\begin{proof} Let $\{\mathcal{V}_i: i\in \mathbb{N}\}$ be a family
of minimally bounded  $\omega$-covers of $X$. For each $i\in
\mathbb{N}$, we consider the family $A_i=\{f_{i,V}\in
Q_p(X,\mathbb{R}): V\in \mathcal{V}_i\}$ such that
$f_{i,V}(V)=\frac{1}{i}$ and $f_{i,V}(X\setminus V)=1$ for
$V\in\mathcal{V}_i$. Let $A=\bigcup A_i$. Then ${\bf 0}\in
\overline{A}\setminus A$. Since $Q_p(X,\mathbb{R})$ is
Fr\'{e}chet-Urysohn at point ${\bf 0}$, there is a sequence
$\{f_{i,V_i}: i\in \mathbb{N}\}$ such that $f_{i,V_i}\in A$ for
each $i\in \mathbb{N}$ and  $f_{i,V_i}\rightarrow {\bf 0}$
($i\rightarrow\infty$). Note that $\{V_i: i\in \mathbb{N}\}$ is a
minimally bounded $\gamma$-cover of $X$.
\end{proof}

\begin{theorem} Let $X$ be an open Whyburn space. The space $Q_p(X,\mathbb{R})$ is Fr\'{e}chet-Urysohn at the point ${\bf 0}$ if and only if
$X$ has the property $S_1(\Omega^s, \Gamma^s)$.
\end{theorem}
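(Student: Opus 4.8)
The plan is to prove both implications of the equivalence, using Lemma \ref{lem121} for the forward direction and constructing a convergent sequence by hand for the reverse direction. The statement is an ``iff'', and one half is already essentially done: the implication that Fr\'{e}chet-Urysohn at ${\bf 0}$ yields $S_1(\Omega^s,\Gamma^s)$ is exactly Lemma \ref{lem121}, which holds for arbitrary Hausdorff semi-regular $X$ and does not even require the open Whyburn hypothesis. So I would simply invoke that lemma for the necessity direction and concentrate all the work on sufficiency.

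For the reverse implication, assume $X$ has the property $S_1(\Omega^s,\Gamma^s)$ and take a set $A\subseteq Q_p(X,\mathbb{R})$ with ${\bf 0}\in\overline{A}\setminus A$. The goal is to extract a sequence from $A$ converging pointwise to ${\bf 0}$. The natural strategy is to manufacture, for each $i\in\mathbb{N}$, a minimally bounded $\omega$-cover $\mathcal{V}_i$ of $X$ out of the functions in $A$, apply $S_1(\Omega^s,\Gamma^s)$ to get a $\gamma$-cover $\{V_i:i\in\mathbb{N}\}$ with $V_i\in\mathcal{V}_i$, and then read off the corresponding functions $f_i\in A$ as the desired sequence. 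Concretely, for each $f\in A$ and each $i$, the set $W_{f,i}=\{x\in X: |f(x)|<1/i\}$ is a candidate member of a cover; since ${\bf 0}\in\overline{A}$, for every finite $F\subseteq X$ and every $i$ there is some $f\in A$ with $|f(x)|<1/i$ on $F$, which is precisely the assertion that the relevant family of such sets forms an $\omega$-cover. The key technical point will be to pass from these sets $W_{f,i}$ to \emph{minimally bounded} sets (i.e.\ semi-open sets with semi-open complement), so that the family genuinely lies in $\Omega^s$; here is where the structure of quasicontinuous functions and the open Whyburn hypothesis should be used to replace each $W_{f,i}$ by a minimally bounded set $V_{f,i}$ that still witnesses membership of finite sets and still corresponds to a function close to ${\bf 0}$.

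Once the minimally bounded $\omega$-covers $\mathcal{V}_i$ are in place, I would apply $S_1(\Omega^s,\Gamma^s)$ to the sequence $\langle\mathcal{V}_i:i\in\mathbb{N}\rangle$ to select $V_i\in\mathcal{V}_i$ such that $\{V_i:i\in\mathbb{N}\}$ is a $\gamma$-cover of $X$. Tracking back through the correspondence, each $V_i$ comes from a function $f_i\in A$ with $|f_i|<1/i$ on $V_i$. The $\gamma$-cover property says that every $x\in X$ lies in all but finitely many $V_i$; combined with the bound $|f_i(x)|<1/i$ on $V_i$, this forces $f_i(x)\to 0$ for every $x$, i.e.\ $f_i\to{\bf 0}$ pointwise, which is exactly the sequence witnessing the Fr\'{e}chet-Urysohn property at ${\bf 0}$.

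The main obstacle I anticipate is the minimal-boundedness requirement in the middle paragraph: the raw sublevel sets $\{|f|<1/i\}$ of a quasicontinuous function need not be semi-open, nor need their complements be, so they do not automatically belong to $\Omega^s$. Bridging this gap — producing from each $f\in A$ a genuinely minimally bounded set that both covers the requisite finite sets and keeps the associated function uniformly small — is the crux, and it is where the open Whyburn hypothesis on $X$ (used to regularize the relevant open sets, much as in Lemma \ref{lem11} and Proposition \ref{pr11}) earns its keep. If that regularization step can be carried out cleanly, the remainder of the argument is the routine bookkeeping of matching selected covers to selected functions and reading off pointwise convergence from the $\gamma$-cover condition.
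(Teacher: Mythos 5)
Your skeleton coincides exactly with the paper's: necessity is Lemma \ref{lem121}, and sufficiency proceeds by turning the preimages $f^{-1}(-\frac{1}{i},\frac{1}{i})$, $f\in A$, into minimally bounded $\omega$-covers $\mathcal{V}_i$, applying $S_1(\Omega^s,\Gamma^s)$, and reading pointwise convergence of the selected functions off the $\gamma$-cover property. But your proposal stops precisely where the proof has to do its only real work: you name the passage from these preimages to genuinely minimally bounded sets as ``the crux'' and leave it conditional (``if that regularization step can be carried out cleanly\dots''), so the central step is flagged rather than proved. The paper settles it as follows: each $U=f^{-1}(-\frac{1}{i},\frac{1}{i})$ is \emph{already semi-open} --- here your proposal also misstates the situation, since quasicontinuous equals semi-continuous (Levine, cited in the paper), so preimages of open sets are automatically semi-open; what can fail is only semi-openness of the complement --- and then, because $X$ is open Whyburn and (by the standing assumption of Section 1) semi-regular, for each finite $K\subset U$ there is a minimally bounded set $V_{K,U,i}$ with $K\subset V_{K,U,i}\subset U$: regular open pieces around the points of $K\cap Int U$ supplied by semi-regularity, together with sets of the form $B_x\cup\{x\}$ supplied by the open Whyburn property for the points $x\in K\setminus Int U$, in the style of the set $M_K$ in Lemma \ref{lem11}. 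Without this construction there is no proof, only a plan.

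A second, smaller defect is your bookkeeping: you propose one minimally bounded set $V_{f,i}$ per function $f$, but no proper subset of $W_{f,i}$ can ``witness membership'' of \emph{all} finite sets, so the family $\{V_{f,i}: f\in A\}$ need not be an $\omega$-cover. The paper indexes the cover by pairs: one set $V_{K,U,i}$ for each finite $K\subset U$ and each $U\in\mathcal{U}_i$; then, given a finite $F\subseteq X$, one first finds $U\in\mathcal{U}_i$ with $F\subseteq U$ (using that $\mathcal{U}_i$ is an $\omega$-cover, which your density observation does give) and takes $V_{F,U,i}$. With that indexing the selection $V_{K_i,U_i,i}$ still determines a function $f_i$ via $U_i=f_i^{-1}(-\frac{1}{i},\frac{1}{i})$, and your final convergence argument goes through verbatim. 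So: right strategy, correct necessity direction and correct endgame, but the regularization lemma --- the one place the open Whyburn and semi-regularity hypotheses actually act --- is missing, and the cover must be indexed by finite sets, not by functions.
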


\begin{proof} By Lemma \ref{lem121}, it is enough to prove a
sufficient condition.

Let ${\bf 0}\in \overline{A}\setminus A$ for some set $A\subset
Q_p(X,\mathbb{R})$. For each $i\in \mathbb{N}$, we consider the
set $\mathcal{U}_i=\{f^{-1}(-\frac{1}{i},\frac{1}{i}): f\in A\}$.
Clear that $\mathcal{U}_i$ is a semi-open $\omega$-cover of $X$
for each $i\in \mathbb{N}$.

Let $U\in \mathcal{U}_i$. Since $X$ is an open Whyburn
semi-regular space, for each finite subset $K$ of $U$, there is a
minimally bounded set $V_{K,U,i}$ such that $K\subset
V_{K,U,i}\subset U$. Thus, the family $\mathcal{V}_i=\{V_{K,U,i}:
K\in[U]^{<\omega}$ and $U\in \mathcal{U}_i\}$ is a minimally
bounded $\omega$-cover of $X$ for each $i\in \mathbb{N}$. Since
$X$ has the property $S_1(\Omega^s, \Gamma^s)$ there exists a
sequence $(V_{K_i,U_i,i} : i \in \mathbb{N})$ such that for each
$i$, $V_{K_i,U_i,i} \in \mathcal{V}_i$ and $\{V_{K_i,U_i,i} : i
\in \mathbb{N}\}$ is a minimally bounded $\gamma$-cover of $X$.
Then the sequence $(f_i: U_i=f_i^{-1}(-\frac{1}{i},\frac{1}{i}),
i\in \mathbb{N})\rightarrow {\bf 0}$ $(i\rightarrow \infty)$.

\end{proof}

\section{Examples}

Similarly the proof of Proposition \ref{pr11}, we get the
following result.

\begin{proposition}\label{prop1} Let $X$ be a space with a dense subset $D$ of isolated points such that $D=D_1\cup D_2$ where $\overline{D_1}=X\setminus D_2$ and
$\overline{D_2}=X\setminus D_1$ and let $Q_p(X,\mathbb{R})$ be a
Fr\'{e}chet-Urysohn space. Then $D$ is countable.
\end{proposition}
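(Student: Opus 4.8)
The plan is to mimic the construction in the proof of Proposition~\ref{pr11}, using the two isolated ``sides'' $D_1,D_2$ in place of a minimally bounded set and its boundary. First I record the structural consequences of the hypotheses. Since $\overline{D_1}=X\setminus D_2$ and $\overline{D_2}=X\setminus D_1$, the sets $D_1$ and $D_2$ are open (indeed every point of $D$ is isolated), and writing $N:=X\setminus D$ we obtain $N\subseteq \overline{D_1}\cap\overline{D_2}$; that is, every point of $N$ is simultaneously a limit point of $D_1$ and of $D_2$. Suppose toward a contradiction that $D$ is uncountable; then at least one of $D_1,D_2$ is uncountable, and by the symmetry of the hypotheses I may assume $|D_1|>\aleph_0$.

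Next I build the target function and the approximating family, exactly as in Proposition~\ref{pr11}. Let $g$ be equal to $0$ on $D_1$ and to $1$ on $X\setminus D_1=D_2\cup N$. For every finite $K\subseteq D_1$ let $g_K$ be equal to $0$ on $K$ and to $1$ on $X\setminus K$, and set $C=\{g_K:K\in[D_1]^{<\omega}\}$. Each $g_K$ is continuous, since a finite set of isolated points in a Hausdorff space is clopen, so $g_K\in Q_p(X,\mathbb{R})$. A short computation shows $g\in\overline{C}$: given a basic neighborhood of $g$ determined by finitely many points $x_1,\dots,x_m$ and $\varepsilon>0$, it suffices to take $K\supseteq\{x_1,\dots,x_m\}\cap D_1$, for then $g_K(x_j)=g(x_j)$ for every $j$ (they agree automatically off $D_1$, and at the $x_j\in D_1$ by the choice of $K$).

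The step I expect to be the crux is verifying that $g$ itself lies in $Q_p(X,\mathbb{R})$, and this is precisely where the closure hypotheses are used. At an isolated point of $D_1$ or of $D_2$ quasicontinuity is trivial, since singletons are open and $g$ is locally constant there. At a point $n\in N$ we have $g(n)=1$; given a neighborhood $U\ni n$ and $V\ni 1$, the inclusion $n\in\overline{D_2}$ forces $U$ to meet $D_2$, so choosing $d\in U\cap D_2$ the open set $W=\{d\}\subseteq U$ satisfies $g(W)=\{1\}\subseteq V$. Hence $g$ is quasicontinuous. It is here, and only here, that $N\subseteq\overline{D_2}$ is needed; the dense-isolated structure makes every other verification routine.

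Finally I run the convergence argument of Proposition~\ref{pr11}. Since $Q_p(X,\mathbb{R})$ is Fr\'{e}chet-Urysohn and $g\in\overline{C}$, there is a sequence $\{g_{K_i}:i\in\mathbb{N}\}\subseteq C$ with $g_{K_i}\to g$. As $D_1$ is uncountable while $\bigcup_i K_i$ is countable, I may pick $z\in D_1\setminus\bigcup_i K_i$; then $g_{K_i}(z)=1$ for all $i$ whereas $g(z)=0$, so $\{g_{K_i}\}$ cannot converge to $g$ at the point $z$ in $\tau_p$, a contradiction. Therefore $D_1$ is countable, and the symmetric argument (interchanging $D_1$ and $D_2$ and using $N\subseteq\overline{D_1}$) shows $D_2$ is countable. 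Consequently $D=D_1\cup D_2$ is countable.
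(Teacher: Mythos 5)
Your proposal is correct and is essentially the paper's intended argument: the paper justifies this proposition only by reference to the technique of Proposition~\ref{pr11}, and your adaptation --- two-valued quasicontinuous functions vanishing on finite subsets of $D_1$, the limit function $g$ vanishing exactly on $D_1$, and the Fr\'{e}chet-Urysohn contradiction at a point of $D_1$ outside the countable union $\bigcup_i K_i$ --- is precisely that technique. You also correctly isolate the one non-routine step, namely that the hypothesis $\overline{D_2}=X\setminus D_1$ (so $X\setminus D\subseteq\overline{D_2}$) is exactly what makes the limit function $g$ quasicontinuous at the non-isolated points.
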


\begin{proposition} {\it There is a compact space $X$ such that $C_p(X,\mathbb{R})$ is Fr\'{e}chet-Urysohn, but $Q_p(X,\mathbb{R})$ is not}.
\end{proposition}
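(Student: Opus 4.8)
The plan is to exhibit a concrete compact space $X$ for which the continuous function space is Fréchet-Urysohn while the quasicontinuous one fails. The most natural candidate is the one-point compactification of an uncountable discrete space, say $X = D \cup \{\infty\}$ where $|D| = \aleph_1$ (or any uncountable cardinal) and $D$ is discrete. This space is compact Hausdorff, every point of $D$ is isolated, and $\infty$ is the unique non-isolated point, with neighborhoods being cofinite subsets containing $\infty$. First I would verify that $C_p(X,\mathbb{R})$ is Fréchet-Urysohn. By the Gerlits--Nagy characterization quoted in the excerpt, this amounts to checking that $X$ has the property $S_1(\Omega,\Gamma)$; equivalently one shows $X$ is a $\gamma$-space. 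For the one-point compactification of a discrete set this is straightforward: any open $\omega$-cover must contain sets covering $\infty$, and since neighborhoods of $\infty$ are cofinite, a suitable countable selection yields a $\gamma$-cover. In fact compact spaces trivially satisfy many covering selection principles, so this direction is routine.

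The substance lies in showing $Q_p(X,\mathbb{R})$ is \emph{not} Fréchet-Urysohn, and here I would invoke the machinery already developed in the paper. The key observation is that $X$ has uncountably many isolated points (namely all of $D$), so $I(X) = D$ is uncountable. If $X$ is additionally shown to be open Whyburn, then Theorem~\ref{th1} forces $X$ to be a Lusin space whenever $Q_p(X,\mathbb{R})$ is Fréchet-Urysohn; but a Lusin space has at most countably many isolated points, contradicting $|D| = \aleph_1$. So the cleanest route is: establish that the one-point compactification of an uncountable discrete space is open Whyburn, then apply Theorem~\ref{th1} to conclude $Q_p(X,\mathbb{R})$ is not Fréchet-Urysohn.

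Verifying the open Whyburn property is where I would focus the real work. Given an open set $A \subseteq X$ and a point $x \in \overline{A} \setminus A$, I must produce an open $B \subseteq A$ with $\overline{B} \setminus A = \{x\}$. Since every point of $D$ is isolated, the only point that can lie in a closure without being in the set is $\infty$; hence $x = \infty$, and $A$ is a subset of $D$ whose closure contains $\infty$, meaning $A$ is infinite. I would then simply take $B$ to be any countably infinite subset of $A$: its closure is $B \cup \{\infty\}$ (an infinite subset of $D$ accumulates only at $\infty$), so $\overline{B} \setminus A = \{\infty\} = \{x\}$, as required. This confirms $X$ is open Whyburn.

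The main obstacle I anticipate is not any single step but ensuring the example genuinely separates the two function spaces rather than collapsing trivially. In particular I should confirm that $X$ is Hausdorff semi-regular (so that the standing hypotheses apply) and that the Gerlits--Nagy argument for $C_p(X,\mathbb{R})$ is carried out correctly for an uncountable index set, since $\gamma$-covers interact subtly with uncountable discrete pieces. An alternative, should the one-point compactification prove awkward, would be to use a double-arrow (split interval) type space or another compact first-countable space with an uncountable set of isolated points whose Lusin-type structure is easier to rule out; but I expect the one-point compactification to work cleanly via the direct route through Theorem~\ref{th1}.
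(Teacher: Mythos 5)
Your proof is correct, but it takes a genuinely different route from the paper's. The paper uses $X=\omega_1+1$ with the order topology: there $Q_p(X,\mathbb{R})$ fails to be Fr\'{e}chet-Urysohn by Proposition \ref{prop1} (the isolated points form a dense set that splits into two pieces with complementary closures), while $C_p(X,\mathbb{R})$ is Fr\'{e}chet-Urysohn because $C_p(Y,\mathbb{R})$ is Fr\'{e}chet-Urysohn for compact $Y$ if and only if $Y$ is scattered \cite{GN}. You instead take the one-point compactification of an uncountable discrete space and run the negative half through the open Whyburn machinery: a direct verification of the open Whyburn property followed by Theorem \ref{th1} and clause (b) of the definition of a Lusin space. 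Your steps are sound: the open Whyburn check is complete, and your $\gamma$-cover selection for the $C_p$ half works once the sets $U_n$ are chosen inductively so that their finite complements are pairwise disjoint; note, however, that your aside that compact spaces ``trivially'' satisfy such selection principles is false ($[0,1]$ is compact but not a $\gamma$-space --- for compact spaces the $\gamma$-property is equivalent to scatteredness), so that concrete selection (or the citation to \cite{GN}) is genuinely needed. It is also worth noting that your route is unavailable for the paper's example: $\omega_1+1$ is not open Whyburn, since any set of successor ordinals cofinal in $\omega_1$ is uncountable and therefore has limit points below $\omega_1$, so no open $B$ inside the set of successors can have $\overline{B}$ adding only the point $\omega_1$; this is presumably why the paper argues via Proposition \ref{prop1} rather than Theorem \ref{th1}. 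Conversely, your space is also covered directly by Proposition \ref{prop1} (split $D$ into uncountable halves $D_1,D_2$; then $\overline{D_i}=D_i\cup\{\infty\}=X\setminus D_{3-i}$) and verbatim by the unnumbered corollary following Theorem \ref{th1} (an open Whyburn space with $I(X)$ uncountable and dense), so your write-up could be shortened; what your longer route buys is a fully self-contained example in which every relevant property is verified by elementary hand computation rather than quoted.
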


\begin{proof}
Let $X=\omega_1+1$. Here $\omega_1+1$ is the space
$\{\alpha:\alpha\leq \omega_1\}$ with the order topology. By
Proposition \ref{prop1}, $Q_p(X,\mathbb{R})$ is not
Fr\'{e}chet-Urysohn. It well known that $C_p(Y,\mathbb{R})$ is
Fr\'{e}chet-Urysohn for a compact space $Y$ if and only if $Y$ is
scattered \cite{GN}. Hence, $C_p(X,\mathbb{R})$ is
Fr\'{e}chet-Urysohn.
\end{proof}

\begin{proposition} There is an uncountable separable metrizable space
$X$ such that $C_p(X,\mathbb{R})$ is Fr\'{e}chet-Urysohn, but
$Q_p(X,\mathbb{R})$ is not.
\end{proposition}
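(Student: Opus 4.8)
The plan is to reduce the proposition to the classical existence of an uncountable $\gamma$-set. Recall from the Gerlits--Nagy theorem quoted in the Introduction that $C_p(X,\mathbb{R})$ is Fr\'{e}chet-Urysohn precisely when $X$ is a $\gamma$-space, and that for a subspace $X$ of $\mathbb{R}$ this says exactly that $X$ is a $\gamma$-set. Hence it suffices to exhibit an uncountable $\gamma$-set $X\subseteq\mathbb{R}$: as a subspace of $\mathbb{R}$ such an $X$ is automatically separable and metrizable, and $C_p(X,\mathbb{R})$ is then Fr\'{e}chet-Urysohn by the cited characterization. So the whole burden of the statement lies in the word ``uncountable''.

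For the negative half I would simply invoke Theorem \ref{cor210}. Since the space $X$ produced above is metrizable and uncountable, that theorem---a ${\bf ZFC}$ result---gives at once that $Q_p(X,\mathbb{R})$ is not Fr\'{e}chet-Urysohn, with no further argument needed. Conceptually this reflects the mechanism inside the proof of Theorem \ref{cor210}: had $Q_p(X,\mathbb{R})$ been Fr\'{e}chet-Urysohn, $X$ would have to be a Lusin set, and a Lusin set cannot be ``always first category'' whereas every $\gamma$-set is, a contradiction. Thus the same $X$ that makes $C_p$ Fr\'{e}chet-Urysohn automatically makes $Q_p$ fail it, which is precisely the separation the proposition asserts.

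The only genuine difficulty, and the point at which a set-theoretic hypothesis enters, is manufacturing the uncountable $\gamma$-set. This is not possible in ${\bf ZFC}$ alone: every $\gamma$-set has strong measure zero, so under the Borel Conjecture all $\gamma$-sets are countable. I would therefore phrase the example as a consistency result and build $X$ under $\mathfrak{p}=\mathfrak{c}$ (which holds, for instance, under ${\bf CH}$ or under Martin's Axiom), following the Galvin--Miller construction of a $\gamma$-set of cardinality $\mathfrak{c}$. Concretely one constructs $X$ by a transfinite recursion of length $\mathfrak{c}$, at each stage diagonalizing against a previously enumerated open $\omega$-cover so as to select, for the point added at that stage, a single member of the cover that will witness the $\gamma$-selection; the hypothesis $\mathfrak{p}=\mathfrak{c}$ is exactly what guarantees that the countably many filter constraints accumulated at each step have a pseudo-intersection, keeping the recursion alive. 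The main obstacle is bookkeeping this recursion so that the resulting set is simultaneously uncountable and satisfies $S_1(\Omega,\Gamma)$; once that set is in hand, the appeal to Theorem \ref{cor210} for the failure of the Fr\'{e}chet-Urysohn property of $Q_p(X,\mathbb{R})$ is immediate.
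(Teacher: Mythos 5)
Your proposal is correct and is essentially the paper's own proof, which in its entirety reads: by Theorem \ref{cor210} it is enough to consider any uncountable $\gamma$-space $X\subset\mathbb{R}$ --- i.e., exactly your reduction (Gerlits--Nagy gives the Fr\'{e}chet-Urysohn property of $C_p(X,\mathbb{R})$, and Theorem \ref{cor210} rules it out for $Q_p(X,\mathbb{R})$ since $X$ is uncountable and metrizable). Your additional observation --- that the existence of an uncountable $\gamma$-set is not a theorem of ${\bf ZFC}$ (every $\gamma$-set is Rothberger, hence of strong measure zero, so under the Borel Conjecture all are countable), and must be secured under an extra hypothesis such as $\mathfrak{p}=\mathfrak{c}$ via the Galvin--Miller construction --- is a genuine subtlety that the paper silently elides, so on this point you are more careful than the source.
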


By Corollary \ref{cor210}, it is enough consider any uncountable
$\gamma$-space $X\subset \mathbb{R}$.

\begin{proposition} There is an uncountable $T_1$-space $X$ such that
$Q_p(X,\mathbb{R})$ is Fr\'{e}chet-Urysohn.
\end{proposition}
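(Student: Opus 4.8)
The plan is to step outside the Hausdorff setting (which is why the statement is phrased for $T_1$ rather than for Hausdorff spaces) and use the coarsest natural $T_1$ topology, the cofinite topology, to force $Q(X,\mathbb{R})$ to collapse to the constants. Concretely, I would let $X$ be any uncountable set whose open sets are $\emptyset$ together with all complements of finite subsets. This space is obviously uncountable and $T_1$, since finite sets (in particular singletons) are closed; it is of course not Hausdorff. The definition of quasicontinuity requires only a topological space, so it applies verbatim here.

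The heart of the argument is to show that every quasicontinuous $f\colon X\to\mathbb{R}$ is constant. First I would note that in the cofinite topology every nonempty open set is cofinite. Fixing $x$ and applying quasicontinuity with $U=X$ and $V=(f(x)-\varepsilon,f(x)+\varepsilon)$ produces a nonempty open, hence cofinite, set $W$ with $f(W)\subseteq V$; equivalently $f^{-1}(\mathbb{R}\setminus V)$ is finite. Thus for every $x$ and every $\varepsilon>0$ the set of points at which $f$ differs from $f(x)$ by at least $\varepsilon$ is finite. Comparing two arbitrary points $x$ and $y$, the corresponding cofinite sets $W_x$ and $W_y$ (for the same $\varepsilon$) intersect, and any common point $z$ satisfies $|f(x)-f(y)|\le |f(x)-f(z)|+|f(z)-f(y)|<2\varepsilon$. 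Letting $\varepsilon\to 0$ gives $f(x)=f(y)$, so $f$ is constant, and $Q(X,\mathbb{R})$ consists exactly of the constant functions.

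To finish, I would observe that under the pointwise topology the set of constant functions is the diagonal of $\mathbb{R}^X$, and the assignment sending $c\in\mathbb{R}$ to the constant function with value $c$ is a homeomorphism of $\mathbb{R}$ onto $Q_p(X,\mathbb{R})$. Hence $Q_p(X,\mathbb{R})$ is homeomorphic to $\mathbb{R}$, which is metrizable and therefore Fr\'{e}chet-Urysohn. I expect the only delicate point to be the collapse to constants: the coarseness of the topology might at first suggest an abundance of quasicontinuous functions, but the key realization is that quasicontinuity forces $f$ to converge to $f(x)$ along the cofinite filter simultaneously for every $x$, and a filter limit in $\mathbb{R}$ is unique. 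Everything else is routine.
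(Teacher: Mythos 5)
Your proposal is correct and takes essentially the same route as the paper: the paper uses exactly this example (an uncountable set with the cofinite, or alternatively co-countable, topology) and asserts that every quasicontinuous function is then constant, so that $Q_p(X,\mathbb{R})$ is homeomorphic to $\mathbb{R}$ and hence Fr\'{e}chet-Urysohn. The only difference is that you supply the (correct) verification of the collapse to constants, which the paper leaves unproved.
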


Let $X$ be an uncountable set with the cofinite (or co-countable)
topology. Then $Q_p(X,\mathbb{R})$ is homeomorphic to $\mathbb{R}$
 because any quasicontinuous function on $X$
is a constant function, and, hence, $Q_p(X,\mathbb{R})$ is
Fr\'{e}chet-Urysohn.

\section{Open questions}

{\bf Question 1.} Suppose that $X$ is a (first-countable, regular)
submetrizable and $Q_p(X,\mathbb{R})$ is Fr\'{e}chet-Urysohn. Is
the space $X$ countable?

\medskip

{\bf Question 2.} Suppose that $X$ is an open Whyburn $T_2$
semi-regular space and $Q_p(X,\mathbb{R})$ is Fr\'{e}chet-Urysohn.
Is the space $X$ countable?

\medskip

{\bf Question 3.} Suppose that $X$ is a $T_2$ space and
$Q_p(X,\mathbb{R})$ is Fr\'{e}chet-Urysohn. Is the space $X$
Lusin?

\medskip

{\bf Question 4.} Suppose that $X$ is a Lusin space and a
$\gamma$-space. Is the space $X$ countable?

\medskip

{\bf Question 5.} Suppose that a (an open Whyburn) space $X$ has
the property $S_1(\Omega^s, \Gamma^s)$.  Will the space
$Q_p(X,\mathbb{R})$ have the Fr\'{e}chet-Urysohn property at each
point?

\bibliographystyle{model1a-num-names}
\bibliography{<your-bib-database>}

\begin{thebibliography}{10}

\bibitem{1}
G. Aumann, Reelle Funktionen, Berlin-G\"{o}ttingen-Heidelberg,
1954.


\bibitem{Bai}
R. Baire, Sur les fonctions de variables r\'{e}elles, Annali di
Mat. (3):3, 1899, 1-123.



\bibitem{Bor}
J.M. Borwein, Minimal cuscos and subgradients of Lipschitz
functions, In: Fixed point theory and applications (Marseille,
1989), Pitman Res. Notes Math. Ser. 252, Longman Sci. Tech.,
Harlow 1991, 57–81.


\bibitem{Bou}
A. Bouziad, Every \v{C}ech-analytic Baire semitopological group is
a topological group, Proc. Amer. Math. Soc. 124 (1996) 953–959.




\bibitem{Cr}
A. Crannell, M. Frantz, M. LeMasurier, Closed relations and
equivalence classes of quasicontinuous functions, Real Anal.
Exchange 31 (2006/2007) 409-424.

\bibitem{Eng}
R. Engelking, General Topology, Revised and completed edition,
Heldermann Verlag Berlin (1989).



\bibitem{Kem}
S. Kempisty, Sur les fonctions quasi-continues, Fund. Math. {\bf
19} (1932) 184--197.


\bibitem{Kun}
K. Kunen, Lusin Spaces, Topology Proceedings, vol. 1 (1976)
191--199.


\bibitem{Lev}
N. Levine, Semi-open sets and semi-continuity in topological
spaces, Amer. Math. Monthly {\bf 70} (1963) 36–41.




\bibitem{Nj}
O. Njastad, On some classes of nearly open sets, Pacific Journal
of Mathematics, {\bf 15} (1965) 961--970.






\bibitem{GN}
J. Gerlits, Zs. Nagy, Some properties of $C(X)$, $I$, Topology and
its Applications, {\bf 14} (1982) 151--161.

\bibitem{GN2}
J. Gerlits, Zs. Nagy, Some properties of $C(X)$, II, Topology
Appl. {\bf 15}:3 (1983), 255--262.


\bibitem{HM}
D. Hol\'{y}, L. Matej\'{i}\v{c}ka, Quasicontinuous functions,
minimal USCO maps and topology of pointwise convergence,
Mathematica Slovaca {\bf 60}:4 (2010) 507--520.

\bibitem{Hol1}
\v{L}. Hol\'{a}, D. Hol\'{y}, Pointwise convergence of
quasicontinuous mappings and Baire spaces, Rocky Mount. Math. J.
41 (2011) 1883–-1894.

\bibitem{Hol2}
\v{L}. Hol\'{a}, D. Hol\'{y}, Quasicontinuous subcontinuous
functions and compactness, Mediterr. J. Math. 13 (2016)
4509–-4518.

\bibitem{Hol3}
\v{L}. Hol\'{a}, D. Hol\'{y}, Quasicontinuous functions and
compactness, Mediterr. J. Math. 14 (2017) Art. No. 219.


\bibitem{Hol5}
\v{L}. Hol\'{a}, D. Hol\'{y}, Quasicontinuous functions and the
topology of pointwise convergence, Topology Appl. 282 (2020)
107301.

\bibitem{Hol6}
\v{L}. Hol\'{a}, Spaces of densely continuous forms, USCO and
minimal USCO maps, Set-Valued Anal. 11 (2003) 133–-151.

\bibitem{Hol7}
\v{L}. Hol\'{a}, D. Hol\'{y}, W.B. Moors, USCO and Quasicontinuous
Mappings, De Gruyter, 2021.


\bibitem{handbook}
S. Todor\v{c}evi\'{c}, Trees and Linearly Ordered Sets, in {\it
Handbook of Set-Theoretic Topology}, K. Kunen and J. Vaughan
(eds.), North-Holland (1984), 235--293.




\bibitem{Mar}
S. Marcus, Sur les fonctions quasi-continues au sense Kempisty,
Colloq. Math. 8 (1961) 45–-53.

\bibitem{Moo1}
W.B. Moors, Any semitopological group that is homeomorphic to a
product of \'{C}ech-complete spaces is a topological group,
Set-Valued Var. Anal. 21 (2013) 627–-633.

\bibitem{Moo}
W.B. Moors, Fragmentable mappings and CHART groups, Fund. Math.
234 (2016) 191–200.

\bibitem{Moo3}
W.B. Moors, Semitopological groups, Bouziad spaces and topological
groups, Topology Appl. 160 (2013) 2038-–2048.

\bibitem{Neo}
T. Neubrunn, Quasi-continuity, Real Anal. Exchange 14 (1988)
259-–306.


\bibitem{Rez}
E. Reznichenko, Extension of mappings from the product of
pseudocompact spaces, Topology Appl. 332 (2022) 108329.



\bibitem{Gro}
J. De Groot, Discrete subspaces of Hausdorff spaces, Bull. Acad.
Polon. Sci.,  13 (1965) 537--544.





\bibitem{Ser}
W. Sierpinski, Hypothese du Continu (New York, 1956).

\bibitem{SZ}
Z. Szentmikl\'{o}ssy, S spaces and L spaces under Martin's Axiom.
In {\it Topology}, {\bf 23} of {\it Coll. Math. Soc. Janos
Bolyai}, 1139--1145. North-Holland, 1980. Fourth Colloq., Budapest
1978.


\bibitem{H1}
M. Scheepers, \textit{Combinatorics of open covers (I) : Ramsey theory},  Topology Appl.,\textbf{69} (1996), 31-62.







\end{thebibliography}

\end{document}